\newtheorem{thm}{Theorem}[section]
\newtheorem{cor}[thm]{Corollary}
\newtheorem{lemma}[thm]{Lemma}
\newtheorem{defn}[thm]{Definition}
\theoremstyle{remark}
\theoremstyle{definition}
\newtheorem{rmk}[thm]{Remark}
\newtheorem{rmks}[thm]{Remarks}
\numberwithin{equation}{thm}
\def\beq{\begin{equation}}
\def\eeq{\end{equation}}
\def\ben{\begin{enumerate}}
\def\een{\end{enumerate}}
\def\crash#1{}
\def\N{{\mathbb N}}
\def\Z{{\mathbb Z}}
\def\R{{\mathbb R}}
\def\D{{\mathbb D}}
\def\l{\left}
\def\r{\right}
\def\[[{\l[\l[}
\def\]]{\r]\r]}
\def\ord{{\rm ord}\;}
\def\lc{\emph{loc.cit.}\;}
\def\cA{{\mathcal A}}
\def\cD{{\mathcal D}}
\def\cE{{\mathcal E}}
\def\cM{{\mathcal M}}
\def\cO{{\mathcal O}}
\def\cL{{\mathcal L}}
\def\cS{{\mathcal S}}
\def\cT{{\mathcal T}}
\def\cX{{\mathcal X}}
\def\vphi{\varphi}
\def\res{{\rm Res}}
\def\1{{\mathbbm{1}}}
\newcommand{\V}[1]{{\color{blue}#1}}
\def\blfootnote{\xdef\@thefnmark{}\@footnotetext}
\begin{document}
\title{Mittag-Leffler problems on Berkovich curves}
\author{Velibor Bojkovi\'c}
\providecommand{\keywords}[1]{\textbf{\textit{Key words and phrases. }} #1}

\date{}

\maketitle
\begin{abstract}
Given a quasi-smooth Berkovich curve $X$ admitting a finite triangulation, finitely many disjoint open annuli $A_1,\dots,A_n$ in $X$ that are not precompact, and for each $i=1,\dots, n$, an analytic function $f_i$ (resp. differential form $\sigma_i$) convergent on $A_i$, we provide a criterion for when there exists an analytic function $f$ (resp. a differential form $\sigma$) on $X$ inducing the functions $f_i$ (resp. a differentials $\sigma_i$). 

Along the way we reprove residue theorem for differentials on smooth Berkovich curves that admit finite triangulations. 
\end{abstract}

\blfootnote{\keywords{Mittag-Leffler problem, Berkovich curves, $p$-adic Runge's theorem}}
\tableofcontents

\subsection*{Introduction}

If $\cS$ is a compact Riemann surface, $p_1,\dots,p_n$ a finite number of points on $\cS$, and for each point $p_i$, $f_i$ is a Laurent polynomial in a local parameter at $p_i$, \emph{Mittag-Leffler problem for meromorphic functions} asks whether there exists a meromorphic function $f$ on $\cS$, holomorphic outside points $p_i$ and whose principal part at $p_i$ coincides with $f_i$. A similar problem for differential forms $\sigma_i$ in place of functions $f_i$ is called \emph{Mittag-Leffler problem for meromorphic differentials} (see \cite[Chapter VI]{MirBook} and \cite[Sections 2.3. and 4.9.]{DanBook1adv}).  

Both problems played a prominent role in the development of the function theory on Riemann surfaces as well as of the cohomological methods which now days are indispensable tools for the study of Riemann surfaces or algebraic curves. As an instance of this, Mittag-Leffler problems are closely related to such cornerstone results such as Riemann-Roch theorem and Serre's duality as is accounted for example in \cite[Chapter VI Section 3.]{MirBook} (or any other book on Riemann surfaces, as a matter of fact). \\

In the present note we study similar problems for analogues of Riemann surfaces over \emph{nonarchimedean} fields. If $k$ is an algebraically closed, complete, nontrivially valued and non-archmedean field of characteristic 0 (as is the underlying assumption throughout the paper), the role of the ``Riemann surfaces over $k$'' will be taken by quasi-smooth $k$-analytic curves in the sense of Berkovich analytic geometry over $k$. More precisely, those curves $X$ which admit finite triangulations (hence we call them finite), which, loosely speaking means that we can take away finitely many points $\cT$ out of $X$ such that the remaining is a disjoint union of open discs and \emph{finitely many} open annuli (for example, analytification of smooth projective $k$-algebraic curves are finite, but so are quasi-smooth $k$-affinoid curves and many more). Out of these annuli, there will be some which are not precompact in $X$ and we call these \emph{boundary}. The problem that we study is (see Section \ref{s: M-L} for precise formulations): \emph{Given a finite $k$-analytic curve $X$ and finitely many open boundary annuli and on each of then an analytic function (resp. differential form) expressed as a series of the chosen local coordinate, does there exists a global analytic function (resp. differential form) on $X$ that induces these functions?} We note that the classic Mittag-Leffler problems for Riemann surfaces can be expressed in a similar fashion as soon as we agree to consider punctured discs to be open annuli (which we do later on; see also Remark \ref{rmk: classic M-L}).

Our answers, and main results of the present note, namely Theorems \ref{thm: M-L differenitals} and \ref{thm: M-L funcitons} strongly resemble criteria provided for the classical Mittag-Leffler problems. The main tools used in their proofs are 1) a residue theorem for differentials on smooth finite Berkovich curves (that are called wide open curves) and 2) an approximation argument that allows us, by modifying analytic functions and differential forms on our boundary annuli, to pass to the classical Mittag-Leffler problems for which we know the criteria. \\

In the first sections we recall some basic properties of the curves involved, and also get familiar with residues of differential forms on them. An approximation argument on which we base our results is nothing but the beautiful \emph{$p$-adic Runge's theorem} due to M. Raynaud  and which we also recall in this section.  Second section contains residue theorem (and an analogue of the ``inside-outside theorem'') for differentials on smooth finite curves which is certainly well known. However, the proof we provide, based also on the approximation argument seems to be new. Mittag-Leffler problems are discussed in the third section.

\section{Finite curves}
\subsection{Preliminaries}
Throughout the paper $k$ will be a complete, algebraically closed, nontrivially valued non-archimedean field of characteristic 0. The norm on $k$ will be denoted by $|\cdot|$ while the usual absolute value on $\R$ will be denoted by $|\cdot|_{\infty}$.
 
By a $k$-analytic curve we will mean a curve in the sense of Berkovich $k$-analytic geometry as developed in \cite{Ber90,BerCoh}. In particular, we will work with quasi-smooth $k$-analytic curves, which means that every type 1 point (using the Berkovich classification of points on $k$-analytic curves) has a neighborhood isomorphic to an open disc. If $X$ is a quasi-smooth $k$-analytic curve, then any locally finite subset $\cT$ of type 2 and 3 points such that $X\setminus \cT$ is a disjoint union of open discs and annuli\footnote{We consider punctured discs to be annuli as well, but also type 1 points to be closed discs of radius 0}, will be called a triangulation of $X$ (note a slight difference with the definition in \cite[Section 5.1.13]{Duc-book}). Then, Th\'eor\`eme 5.1.14 in \lc implies that every quasi-smooth $k$-analytic curve admits a triangulation. 

We say that $X$ is finite if it admits a finite triangulation. In this case, we have the following criterion for finite curves (\cite[Theorem 1.4.2.]{BojRH},\cite[Theorem 2.8.]{BojPoi-cc})
\begin{thm}
 A connected, quasi-smooth $k$-analytic curve $X$ is finite if and only if it is isomorphic to a complement of finitely many open and closed discs in a smooth projective $k$-analytic curve $X'$.
\end{thm}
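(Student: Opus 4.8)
The plan is to prove the two implications separately: the backward one by restricting and adapting a triangulation of $X'$, and the forward one by capping the ends of $X$ with discs and invoking the algebraicity of proper smooth analytic curves.

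\emph{Complement of discs $\Rightarrow$ finite.} Suppose $X\cong X'\setminus(D_1\cup\cdots\cup D_m)$ with $X'$ a smooth projective (hence connected) $k$-analytic curve and the $D_i$ pairwise disjoint open or closed discs. First I would recall that $X'$, as the analytification of a smooth projective algebraic curve, carries a finite skeleton and hence a finite triangulation (existence of semistable reduction). Next I would refine it to a finite triangulation $\cT'$ that is \emph{adapted} to the discs, meaning that the boundary point of each open $D_i$ and the Shilov point of each closed $D_i$ lies in $\cT'$ and each $D_i$ is a union of connected components of $X'\setminus\cT'$ together with the finitely many triangulation points it contains; enlarging a triangulation by finitely many prescribed type $2$ and $3$ points is always possible and keeps it finite. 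Setting $\cT:=\cT'\cap X$, one checks that $X\setminus\cT=(X'\setminus\cT')\setminus\bigcup_i D_i$ is exactly the union of those connected components of $X'\setminus\cT'$ not contained in any $D_i$; since the points of $\cT'$ inside the $D_i$ are already excised when forming the components of $X'\setminus\cT'$, no components are merged, so $X\setminus\cT$ is a disjoint union of open discs and the (finitely many) annuli of $X'\setminus\cT'$. Hence $\cT$ is a finite triangulation and $X$ is finite.

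\emph{Finite $\Rightarrow$ complement of discs.} Suppose $X$ is finite, with finite triangulation $\cT$ and skeleton $\Gamma$. The curve has two sorts of ends, both finite in number, which I would cap by discs. First, the relative (Berkovich) boundary $\partial X$ is a finite set of type $2$ and $3$ points lying in $X$; at such a point $x$ only finitely many tangent directions fail to be represented in $X$ (otherwise $X$ would not be affinoid near $x$), and I glue back the open disc in each missing direction, which makes $x$ interior in the enlarged curve. Second, each non-precompact (boundary) annulus $A_i$ has an open end limiting to a point not belonging to $X$; I cap that end with the closed disc whose Shilov point is this missing limit. There are finitely many boundary points and finitely many $A_i$, and finitely many discs are attached at each, so only finitely many discs are attached in all. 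The resulting curve $X'$ is connected, quasi-smooth, boundaryless and compact, hence proper. By the algebraization of proper smooth $k$-analytic curves (Berkovich's GAGA for curves), $X'$ is the analytification of a smooth projective algebraic curve, and by construction $X'\setminus X$ is precisely the disjoint union of the attached caps, each a single open or closed disc (a puncture giving a disc of radius $0$, as in the footnote). Thus $X$ is the complement of finitely many open and closed discs in $X'$.

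The main obstacle is the forward direction, and within it the passage from ``compact'' to ``proper and projective.'' I must verify that gluing a disc at each end genuinely removes the corresponding boundary point from the relative boundary of the enlarged curve while introducing no new boundary, so that $X'$ is truly boundaryless; this rests on the local description of $X$ at each end (a half-open annulus, respectively a point $x\in\partial X$ at which all but finitely many tangent directions are already present) together with the nontrivial input that a proper smooth $k$-analytic curve is algebraic and projective. Granting these local models and the algebraization theorem, the identification $X'\setminus X=\bigsqcup_i D_i$ as a finite disjoint union of open and closed discs is immediate from the construction.
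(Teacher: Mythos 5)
You should first be aware that the paper does not prove this theorem at all: it is quoted with references to \cite[Theorem 1.4.2]{BojRH} and \cite[Theorem 2.8]{BojPoi-cc}, so there is no in-paper argument to compare yours against. Judged on its own, your backward implication is sound: refining a finite triangulation of $X'$ so that it contains the boundary point of each open $D_i$ and the Shilov point of each closed $D_i$ does make every component of $X'\setminus\cT'$ either contained in some $D_i$ or disjoint from all of them, and then $\cT'\cap X$ is a finite triangulation of $X$. Your forward implication also follows the standard strategy (cap the ends, show the result is proper and boundaryless, invoke GAGA for curves), and capping a non-precompact annulus end is a legitimate gluing along an open annular overlap.

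The genuine gap is in the other kind of cap. At a point $x$ of the relative boundary of $X$, the open disc you want to ``glue back in a missing direction'' is \emph{disjoint} from $X$: its closure meets $X$ only in the single point $x$, which is not an analytic domain of the disc, so there is no overlap along which to glue and the construction as written does not define an analytic space. The repair is to glue along a closed analytic domain instead: choose an affinoid neighborhood $V$ of $x$ in $X$ isomorphic to a closed disc minus finitely many open discs with common boundary point $x$, and glue the full closed disc to $X$ along $V$ (as one builds $\P^1$ from two closed discs glued along the unit circle); one must then check the result is Hausdorff and good. Separately, you assert rather than prove that the capped curve is compact and boundaryless; this needs an argument, e.g.\ that after capping it carries a finite triangulation all of whose complementary discs and annuli are relatively compact, and that each triangulation point together with its adjacent discs and closed sub-annuli is an affinoid (hence compact) neighborhood. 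With those two points supplied, the appeal to properness and to the algebraicity of proper smooth $k$-analytic curves completes the proof as you indicate.
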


\begin{defn} We will say that $X'$ from the theorem above is a {\em simple projectivization} of the curve $X$.  
\end{defn}

\begin{rmk}
 We will often identify $X$ with its image in $X'$ without explicitly mentioning the isomorphism involved.
\end{rmk}

We note the following particular classes of finite curves. Namely, the compact ones (they are finite since they admit a triangulation and in particular a finite one because of compactness). Then by \cite[Corollaire 6.1.4]{Duc-book} they are either $k$-affinoid either smooth projective $k$-analytic curves. A quasi-smooth $k$-affinoid curve can also be seen as a complement of finitely many open discs in a smooth projective $k$-analytic curve.

As opposed to compact ones, we also have $k$-analytic curves which are complements of finitely many closed discs in a smooth projective curve $X'$. We call such curves {\em wide open}\footnote{We note slight difference with the classic literature, where wide open curves are complements of finitely many closed discs of nonzero radius}. 

Finally, the remaining class of curves contains ones which are isomorphic to a complement of a nonzero (finite) number of open discs and a nonzero (finite) number of closed discs in a smooth projective $k$-analytic curve. We call them {\em semi-open}.

\begin{defn}
 Let $X$ be a finite curve. Then, any open annulus $A$ in $X$ which is not relatively compact and such that $X\setminus A$ is nonempty will be called a boundary annulus of $X$. 
 
 We denote the set of boundary annuli of $X$ as $\cE'(X)$. 
\end{defn}

In fact, if $X$ is finite, $X'$ its simple projectivization then each boundary annulus in $X$ has an endpoint in $X'$ which is the maximal point of one of the closed discs which are complements of $X$ in $X'$. 

We introduce the following the equivalence relation "$\sim$" on $\cE'(X)$ by saying that two annuli $A_1,A_2\in \cE(X)$ are in "$\sim$" if $A_1\subset A_2$ or $A_2\subset A_1$. Then, the set of classes $\cE'(X)/\sim$ is called the set of ends of $X$ and is denoted by $\cE(X)$.

Finally, if $X$ is a finite curve, $X'$ its projectivization, let $\cD$ be the set of open discs that are connected components of $X'\setminus X$. Then, we call the set of ends $\cE(D)$, where $D$ runs through $\cD$, {\em the outer ends of $X$} and we denote it by $\cE^o(X)$.

\subsection{Spectral norm} 

\subsubsection{} If $\cA_X$ is an affinoid algebra corresponding to the $k$affinoid curve $X$, we recall that $\cA_X$ is a $k$-Banach algebra for the sup-norm $|\cdot|_X$ defined as, for $f\in \cA_X$, $|f|_X:=\max_{x\in X}|f|_x$ (here and elsewhere $|\cdot |_x$ is the multiplicative seminorm in the multiplicative spectrum of $\cA_X$ inducing the point $x$), and it coincides with the spectral norm \cite[Theorem 1.3.1.]{Ber90}. By Corollary 2.4.5 in \lc the sup is actually achieved on the Shilov boundary of $X$,  $Sh(X)$. The later consists of finitely many points that can be characterized as follows. 

Let $X'$ be a simple projectivization of $X$ so that $X$ is isomorphic to (and in fact identified with) a complement in $X'$ of finitely many open discs, say $D_1,\dots,D_n$. Then, $Sh(X)$ consists of the points $\overline{D_i}\setminus D_i$, for $i=1,\dots,n$, and where $\overline{D_i}$ is the closure of $D_i$ in $X'$.

 Suppose now that $X$ is a strict quasi-smooth $k$-affinoid curve and $f\in \cA_X$. Let $Sh(X)=\{x_1,\dots,x_n\}$ and let further $A_1,\dots,A_n$ be finitely many open annuli in $X$ such that the closure of $A_i$ in $X$ intersected with $Sh(X)$ consists only of the point $x_i$, for $i=1,\dots,n$. For each $A_i$, let $t_i$ be a coordinate on $A_i$ identifying it with an open annulus $A(0;r_{i,1},r_{i,2})$ in the $t_i$-analytic affine line over $k$, and oriented in such a way that, if $\zeta_{i,r}$, $r\in (r_{i,1},r_{i,2})$ is the point on the skeleton of $A(0;r_{i,1},r_{i,2})$ of radius $r$, then $\lim\limits_{r\to r_2}\zeta_{i,r}\in Sh(X)$. Then, the restriction $f_{|A_i}$ of $f$ to $A_i$ can be expressed as a series $f_i(t_i)=\sum_{j\in\Z}f_{i,j}\,t^j_i$ which is convergent on $A(0;r_{i,1},r_{i,2})$.

By continuity of the function $|f|_{(\cdot)}:X\to \R$, $x\mapsto |f|_x$, we conclude 
\begin{equation}\label{eq: bd sup}
 |f|_X=\max_{1\leq i\leq n}\{\lim\limits_{r\to r_{i,2}}|f_i(t_i)|_{\zeta_{i,r}}\}, 
\end{equation}
and where $|f_i(t_i)|_{\zeta_{i,r}}=\max_{j}|f_{i,j}|r^j$. The equation \eqref{eq: bd sup} will come in handy on several occasions.

% \begin{rmk}\label{rmk: alternative following}
%  Following Remark \ref{rmk: alternative}, one can prove along the lines of \cite[Lemma 7.5.3 (3)]{F-vdP} that the completion of $\cA^\dagger_X$ with respect to the sup norm is $\cA_X$, for any quasi-smooth $k$-affinoid curve. 
% \end{rmk}

\subsubsection{}
More generally, suppose that $M$ is a finite module over an affinoid algebra $\cA_X$. Then, since $\cA_X$ is a Banach algebra, $M$ can be equipped with a norm with respect to which it becomes a Banach $\cA_X$-module, and any two such norms are equivalent \cite[Section 3.7.3]{BGR}. 

For example, if $X=A[0;r_1,r_2]$ is a closed annulus of inner radius $r_1$ and of outer radius $r_2$ equipped with a coordinate $t$, then one norm on $\Omega_X$ is given by $||\omega||=|\sum_{i\in \Z} \alpha_i t^i|_X$, where $\omega=\sum_{i\in\Z}\alpha_it^i\, dt$.

\subsection{$p$-adic Runge's theorem}
Let $X'$ be a smooth projective $k$-analytic curve , $\cX'$ the corresponding $k$-algebraic curve (that is, the analytification of $\cX'$ is $X'$) and let $X$ be a strict $k$-affinoid curve in $X'$. Let $U_1,\dots,U_n$ be the connected components of $X'\setminus X$, and for each $i=1,\dots,n$, let $c_i\in U_i(k)$ be a rational (type 1) point. Finally, let $\cX$ be the affine $k$-algebraic curve $\cX'\setminus\{c_1,\dots,c_n\}$. The following is $p$-adic Runge's theorem {\em a l\`a} M. Raynaud.

\begin{thm}\label{thm: runge} The ring of rational functions on $X'$ that have poles at most in the points $c_1,\dots,c_n$ (that is the coordinate ring of $\cX$) is dense in the ring $\cA_X$ with respect to the spectral norm. 

More generally, for any coherent sheaf $\cM$ on $\cX$ ($\cM$ induces a coherent sheaf on $X$), the restriction of the global sections $\cM(\cX)$ over $X$ is dense in the space of sections $\cM(X)$. 
\end{thm}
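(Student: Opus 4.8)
The plan is to deduce the general (coherent-sheaf) statement from the case of functions, and to prove the case of functions by comparing the \emph{analytic} and \emph{algebraic} principal-part maps attached to the removed discs. For the reduction: since $\cX$ is affine and $\cM$ is coherent, $\cM(\cX)$ is a finite $\cO(\cX)$-module, so I choose a surjection $\cO(\cX)^{\oplus q}\twoheadrightarrow\cM(\cX)$. Base-changing to the affinoid $X$ gives a surjection of Banach $\cA_X$-modules $\cA_X^{\oplus q}\twoheadrightarrow\cM(X)$, which is open by the open mapping theorem. The image of $\cO(\cX)^{\oplus q}$ lands in $\cM(\cX)$, so once $\cO(\cX)$ is dense in $\cA_X$ the set $\cO(\cX)^{\oplus q}$ is dense in $\cA_X^{\oplus q}$ and its (open) image is dense in $\cM(X)$. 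This reduces everything to the first assertion.

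For functions I identify $X$ with $X'\setminus\bigsqcup_i D_i$ as in the structure theorem, write $x_i$ for the Shilov point $\overline{D_i}\setminus D_i$, and fix a boundary annulus $A_i$ around $x_i$ with disc coordinate vanishing at $c_i$. Choosing a Leray cover of $X'$ by $X$ and suitable open neighbourhoods of the $\overline{D_i}$ (all pieces affinoid, hence acyclic for coherent sheaves), \v{C}ech cohomology yields
\begin{equation*}
0\to \cO(X')\to \cO(X)\oplus\bigoplus_i\cO(\overline{D_i})\xrightarrow{\ \de\ }\bigoplus_i\cO(A_i)\to H^1(X',\cO)\to 0,
\end{equation*}
where $\cO(X')=k$ and, by Serre's GAGA, $H^1(X',\cO)\cong H^1(\cX',\cO_{\cX'})$ has dimension $g$. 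Decomposing $\cO(A_i)$ into Laurent parts, the image of $\cO(\overline{D_i})$ is exactly the part extending into the disc, so projecting onto the principal-part spaces $\cO^{-}(A_i)$ (convergent series in negative powers of the disc coordinate) gives a continuous map $\Pi\colon\cA_X\to\bigoplus_i\cO^{-}(A_i)$. Its kernel is $\cO(X')=k$ (a function with no principal parts glues across every disc to a global, hence constant, analytic function), and, reading off the cokernel of $\de$, its image is closed of codimension exactly $g$ with $\bigoplus_i\cO^{-}(A_i)/\Pi(\cA_X)\cong H^1(X',\cO)$.

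On the algebraic side I run the analogue on $\cX'$: from $0\to\cO_{\cX'}\to\cO_{\cX'}(*E)\to\bigoplus_i\iota_{c_i,*}PP(c_i)\to 0$ with $E=\sum_i c_i$, and $H^1(\cX',\cO(*E))=\varinjlim_m H^1(\cX',\cO(mE))=0$, the algebraic principal-part map $\cO(\cX)=\bigcup_m L(mE)\to\bigoplus_i PP(c_i)$ has image of codimension $g=\dim H^1(\cX',\cO)$. A rational function with poles only at the $c_i$ restricts to an element of $\cA_X$ whose analytic principal parts are its algebraic ones, so $\Pi(\cO(\cX))$ equals this algebraic image; moreover the finite Laurent tails $\bigoplus_i PP(c_i)$ are dense in $\bigoplus_i\cO^{-}(A_i)$ for the annulus/sup topology. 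Hence $\Pi(\cO(\cX))$ is a codimension-$g$ subspace of a dense subspace, so $\overline{\Pi(\cO(\cX))}$ has codimension at most $g$; being contained in the closed codimension-$g$ space $\Pi(\cA_X)$, it has codimension exactly $g$, and therefore $\overline{\Pi(\cO(\cX))}=\Pi(\cA_X)$.

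To finish, let $f\in\cA_X$. Then $\Pi(f)\in\Pi(\cA_X)=\overline{\Pi(\cO(\cX))}$, so there exist $R\in\cO(\cX)$ with $\Pi(f-R)$ arbitrarily small. Since $\Pi$ induces a continuous bijection $\cA_X/k\iso\Pi(\cA_X)$ of Banach spaces, its inverse is bounded (open mapping theorem), so $f-R$ is uniformly close to some constant $c\in k$; as $R+c\in\cO(\cX)$, this exhibits $f$ as a spectral-norm limit of functions in $\cO(\cX)$. The main obstacle is precisely the matching carried out in the third paragraph: one must pin down the finite-dimensional obstruction $H^1$ on both sides and check, via GAGA, that it is the \emph{same} $g$-dimensional space, and simultaneously verify that finite principal parts are dense in the convergent ones in the correct topology (where the sup-norm formula \eqref{eq: bd sup} is what makes the norms comparable). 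The local, single-disc approximation is elementary; it is the identification of codimensions and the passage to closures that carries the real content.
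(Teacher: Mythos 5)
The paper does not actually prove this theorem: its ``proof'' is a two-line citation to Raynaud's Corollaire 3.5.2 (transported to algebraically closed $k$ of characteristic $0$ via \cite[Theorem 6]{BojRH}). So your self-contained argument is of a genuinely different nature, and most of it is sound. The reduction of the coherent-sheaf statement to the function statement via a finite presentation, $\cM(X)=\cM(\cX)\otimes_{\cO(\cX)}\cA_X$, and the open mapping theorem for surjections of finite Banach $\cA_X$-modules is correct. The core of the function case is also a legitimate and rather clean mechanism: the Mayer--Vietoris sequence for the admissible affinoid cover by $X$ and slightly enlarged closed discs, the identification of the cokernel of the analytic principal-part map $\Pi$ with $H^1(X',\cO)$ (of dimension $g$ by GAGA), closedness of $\Pi(\cA_X)$ because a bounded operator with finite-dimensional cokernel has closed image, the matching with the algebraic principal-part map whose cokernel is $H^1(\cX',\cO)$, the codimension sandwich $\overline{\Pi(\cO(\cX))}\subseteq\Pi(\cA_X)$ with both of codimension $g$, and the final application of the open mapping theorem to $\cA_X/k\iso\Pi(\cA_X)$. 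Each of these steps checks out, and in particular the density of the finite Laurent tails $PP(c_i)$ in $\cO^-(A_i)$ is correct for the sup norm on a closed annulus, since for negative powers the sup is attained at the inner radius.

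The one genuine gap is your very first identification, ``$X=X'\setminus\bigsqcup_i D_i$ as in the structure theorem.'' The structure theorem realizes $X$ as a complement of open discs in \emph{some} simple projectivization, whereas the theorem is stated for a \emph{given} embedding $X\subset X'$ with $U_1,\dots,U_n$ the connected components of $X'\setminus X$ and $c_i\in U_i(k)$; these components need not be discs (take $X$ a closed disc inside a Tate curve, or a disjoint union of two closed discs in $\P^1$ whose complement is an annulus). Your argument uses the disc structure essentially: the Laurent decomposition $\cO(A_i)=\cO^+(A_i)\oplus\cO^-(A_i)$, the identification of the image of $\cO(\overline{D_i})$ with $\cO^+(A_i)$, and above all the density of the \emph{algebraic} principal parts at $c_i$ inside $\cO^-(A_i)$ all presuppose that $U_i$ is a disc containing $c_i$ as a rational point with coordinate $t_i$. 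If $U_i$ is, say, a wide open of positive genus or with several ends, then $U_i\cap$(thickening) is a union of several annuli, the relevant quotient $\cO(V_i\cap X)/\cO(V_i)$ is no longer a space of convergent tails in one coordinate, and the density of finite algebraic principal parts in it becomes itself a Runge-type statement for $U_i$ --- which threatens circularity. To be fair, the disc case is the only one the paper ever uses (in the proofs of Theorem \ref{thm: residue} and of the Mittag--Leffler theorems, $X'\setminus X$ is always a disjoint union of open discs), so your proof suffices for every application in the text; but as a proof of the theorem as stated it is incomplete, and you should either add the hypothesis that the $U_i$ are open discs or supply the extra argument (or fall back on Raynaud, as the paper does, whose proof proceeds by entirely different means, via formal models).
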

\begin{proof}
 For $X$ a strict $k$-affinoid curve this is essentially \cite[Corollaire 3.5.2]{Ray94}. Although, in the \lc the base field is discretely valued, the proof follows almost verbatim  to algebraically closed fields of characeristic 0 as in \cite[Theorem 6]{BojRH}\footnote{In \cite{BojRH} the field $k$ is assumed to be of mixed characteristic, but the proof carries on to the case of equal characteristic 0}.  
%  So we prove only the case when $X$ is not strict. Let $f\in \cA_X$ and $\eps>0$. By Remark \ref{rmk: alternative following}, there is a $g\in \cA^\dagger_X$ such that $|f-g|_X/<\eps$. Let $Y$ be a strict quasi-smooth $k$-affinoid curve in $X'$ which contains $X$ in its interior and such that $g\in \cA_Y$. Then, there exists a regular function $\vphi$ in $k(\cX)$ such that $|g-\vphi|_Y<\eps$. This implies $|g-\vphi|_X<\eps$ and consequently $|\vphi-f|_X<\eps$.
\end{proof}

\subsection{Residues}

\subsubsection{} Let $A$ be an open annulus. We recall that by our constructin of ends of a finite curve, $A$ has two ends, that is two classes in $\cE(A)$.

% . we say that an open annulus $A'\subset A$ is boundary if $A\setminus A'$ is an open annulus in $A$. Let $\cE_A$ be the set of all boundary annuli in $A$ and let "$\sim$" be an equivalence relation on $\cE_A$ defined by: $A'\sim A"$ if either $A'\subset A"$ either $A"\subset A'$. Then there are two equivalence classes on $\cE_A/\sim$ which we will denote by "+" and "--". Moreover, we will assume that for an annulus $A(0;r_1,r_2)$ the class to which $A(0;r,r_2)$, $r\in (r_1,r_2)$, belongs to be "+", while the other class will be "$-$".

\begin{defn}
 An oriented open annulus $A$ is a pair $(A,e)$, where $e\in \cE(A)$. We may just write $A$ instead of $(A,e)$ if the end $e$ is understood from the context.
 
 We say that a finite morphism $f:A_1\to A_2$ of oriented open annuli $(A_1,e_1)$ and $(A_2,e_2)$ is orientation preserving if the class $e_1$ of $A_1$ is sent to the class $e_2$ of $A_2$. Otherwise, we say that $f$ is orientation reversing.
 
 Finally, an open annulus $A=A(0;r_1,r_2)$ will always be identified with an oriented annulus $(A,e)$, where $e$ is the end that contains $A(0;r_1,r)$, $r\in (r_1,r_2)$ in its class.
\end{defn}
For example, let $f:A(0;r_1,r_2)\to A(0;r_1',r_2')$  be a finite morphism of degree $d$ and let $t$ and $s$ be respective coordinates on the annuli. Then, $f$ can be written as $s=f(t)=\sum_{i\in \Z}f_i\, t^i$ and since $f$ has no zeroes, the theory of valuation polygons tells us there exists an $n\in \Z$ such that $f(t)=f_n\,t^n\,(1+h(t))$, where $|n|_{\infty}=d$, $h(t)$ has constant term $0$ and for every $r\in (r_1,r_2)$, $|h(t)|_r<1$. Then, $f$ is orientation preserving if $n=d$ and orientation reversing if $-n=d$.
\begin{rmk}
 We note that the composition of two orientation reversing morphisms is orientation preserving, and the composition of two orientation preserving morphisms is still orientation preserving. 
\end{rmk}

\subsubsection{}
\begin{lemma} (Compare with \cite[Lemma 2.1.]{Col89})
 Let $A$ be an oriented open annulus, let $t:A\to A(0;r_1,r_2)$ and $s:A\to A(0;r_1',r_2')$ be two orientation preserving coordinates, $\omega\in \Omega_A$ and let
 $$
 \omega=\sum_{i\in \Z}\alpha_i\, t^i\, dt=\sum_{i\in \Z}\beta_i\,s^i\, ds.
 $$
 Then, $\alpha_{-1}=\beta_{-1}$.
\end{lemma}
\begin{proof}
 We may write $s=f(t)=\sum_{i\in\Z}f_i\, t^i$, where $f$ is the corresponding isomorphism of open annuli $A(0;r_1,r_2)\to A(0;r_1',r_2')$. Moreover, since $f$ is in addition orientation preserving, we have 
 \begin{equation}\label{eq: iso relation}
  |f_1|\rho>|f_i|\rho^i,\quad i\neq 1,\quad \forall \rho\in (r_1,r_2).
  \end{equation}
Then,
\begin{align*}
\sum_{i\in\Z}\alpha_i\, t^i\, dt&=\sum_{i\in\Z}\beta_i\, s^i\, ds=\sum_{i\in\Z}\beta_i\, f(t)^i\, f'(t)\, dt\\
&=\beta_{-1}\frac{f'(t)}{f(t)}\,dt+\sum_{i\neq -1}\frac{1}{i+1}\big(f(t)^{i+1}\big)'\, dt,
\end{align*}
It follows that the only term contributing to $\alpha_{-1}$ in the last expression is $\beta_{-1}\frac{f'(t)}{f(t)}$, so it is enough to prove that the coefficient with $t^{-1}$ in $\frac{f'(t)}{f(t)}$ is 1. For an $N\in \N$ and $N>1$, let us put $f_N(t):=\sum_{i=-N}^Nf_i\, t^i$. Then, $\frac{f'(t)}{f(t)}=\lim_{N\to \infty}\frac{f'_N(t)}{f_N(t)}$ on the annulus $A(0;t_1,t_2)$. On the other hand
\begin{align}
\frac{f_N'(t)}{f_N(t)}&=\big(f_1+\sum_{\substack{i=-N\\i\neq 1}}^Ni\, f_i\, t^{i-1}\big)\,\big(f_1\, t+\sum_{\substack{i=-N\\i\neq 1}}^Nf_i\, t^i\big)^{-1}\nonumber\\
&=\frac{1}{t}\, \big(1+\sum_{\substack{i=-N\\i\neq 1}}^N\frac{i\,f_i}{f_1}t^{i-1}\big)\,\big(1+\sum_{\substack{i=-N\\i\neq 1}}^N\frac{f_i}{f_1}t^{i-1}\big)^{-1}\nonumber\\
&=\frac{1}{t}\, \big(1+\sum_{\substack{i=-N\\i\neq 1}}^Ni\,g_i\,t^{i-1}\big)\,\big(1+\sum_{j=1}^\infty(-1)^{j}\big(\sum_{\substack{i=-N\\i\neq 1}}^Ng_i\,t^{i-1}\big)^j\big),\nonumber
\end{align}
where we put $g_i:=f_i/f_1$ and the last expansion is valid all over $A(0;r_1,r_2)$ because of \eqref{eq: iso relation}. What remains is to prove that the constant term in the expression 
\begin{align}
 \big(1+\sum_{\substack{i=-N\\i\neq 1}}^Ni\,g_i\,t^{i-1}\big)\,\big(1+\sum_{j=1}^\infty(-1)^{j}\big(\sum_{\substack{i=-N\\i\neq 1}}^Ng_i\,t^{i-1}\big)^j\big)-1\nonumber\\
 =\sum_{\substack{i=-N\\i\neq 1}}^Ni\,g_i\,t^{i-1}\label{eq: expansion 1}\\
 +\sum_{j=1}^\infty(-1)^{j}\big(\sum_{\substack{i=-N\\i\neq 1}}^Ng_i\,t^{i-1}\big)^j\label{eq: expansion 2}\\
 +\big(\sum_{\substack{i=-N\\i\neq 1}}^Ni\,g_i\,t^{i-1}\big)\,\sum_{j=1}^\infty(-1)^{j}\big(\sum_{\substack{i=-N\\i\neq 1}}^Ng_i\,t^{i-1}\big)^j\label{eq: expansion 3}
\end{align}
is equal to 0. Obviously, the term \eqref{eq: expansion 1} does not contribute to the constant term of the sum. We next use the multinomial theorem applied to the sum \eqref{eq: expansion 2} and suppose that there exist some $J>1$, $j_1,\dots,j_l\geq 1$ and $i_1,\dots,i_l\in\{-N,\dots,N\}\setminus\{1\}$ such that 
\begin{align}
 j_1+\dots+j_l&=J,\nonumber\\
 j_1\,(i_1-1)+\dots+j_l\,(i_l-1)&=0. \label{eq: conditions}
\end{align}
Then (by using the multinomial theorem applied to the power $j=J$ in \eqref{eq: expansion 2}) we see that $(-1)^J\binom{J}{j_1,\dots,j_l}\,g_{i_1}^{j_1}\dots g_{i_l}^{j_l}$ contributes to the constant coefficient. On the other side, we see that the product of the coefficients $g_{i_1}^{j_1}\dots g_{i_l}^{j_l}$ can appear in the sum \eqref{eq: expansion 3} only in the term 
$$
\big(\sum_{\substack{i=-N\\i\neq 1}}^Ni\,g_i\,t^{i-1}\big)\,(-1)^{J-1}\big(\sum_{\substack{i=-N\\i\neq 1}}^Ng_i\,t^{i-1}\big)^{J-1}
$$ 
and more precisely in the following forms
\begin{align*}
(-1)^{J-1}\big(i_1\,g_{i_1}\,\binom{J-1}{j_1-1,j_2,\dots,j_l}g_{i_1}^{j_1-1}g_{i_2}^{j_2}\dots g_{i_l}^{j_l}+\dots+i_l\,g_{i_l}\,\binom{J-1}{j_1,j_2,\dots,j_l-1}g_{i_1}^{j_1}g_{i_2}^{j_2}\dots g_{i_l}^{j_l-1}\big)\\
=(-1)^{J-1}\big(i_1\,\binom{J-1}{j_1-1,j_2,\dots,j_l}+\dots+i_l\,\binom{J-1}{j_1,j_2,\dots,j_l-1}\big)\,g_{i_1}^{j_1}\dots g_{i_l}^{j_l}.
\end{align*}
The final touch comes by noticing that
\begin{align*}
&(-1)^{J-1}\big(i_1\,\binom{J-1}{j_1-1,j_2,\dots,j_l}+\dots+i_l\,\binom{J-1}{j_1,j_2,\dots,j_l-1}\big)\\
&=(-1)^{J-1}\big(\frac{i_1\,j_1}{J}+\dots+\frac{i_l\,j_l}{J}\big)\binom{J}{j_1,\dots,j_l}=(-1)^{J-1}\binom{J}{j_1,\dots,j_l},
\end{align*}
where the last equality holds because of \eqref{eq: conditions}. We conclude that all the terms in the sum \eqref{eq: expansion 2} that contribute to 0 cancel with the terms in the sum \eqref{eq: expansion 3} (and vice versa). By taking the limit $N\to \infty$ the lemma is proved.
\end{proof}
The previous lemma in fact proves that the following definition is good.
\begin{defn}
 Let $(A,e)$ be an oriented open annulus and $\omega\in \Omega_A$. We define the residue $\res_{(A,e)}(\omega)$ of $\omega$ at $e$ by putting
 $\res_{(A,e)}(\omega)=\alpha_{-1},$
 where $\omega=\sum_{i\in\Z}\alpha_i\, t^i\,dt$ and $t$ is any orientation preserving coordinate on $A$.
 
 If $A$ is clear from the context, we may just write $\res_e$ instead of $\res_{(A,e)}$.
 \end{defn}
 \begin{rmk}
  If $A$ is an open annulus which is isomorphic to the punctured open disc $D\setminus\{a\}$, then one of it ends $e$ can be identified with the point $a$. In this case, we will simply write $\res_a$ for $\res_e$.
 \end{rmk}

\begin{lemma}
 Let $A$ be an open annulus, $e_1$ and $e_2$ its two ends and $\omega \in \Omega_A$. Then
 $$
 \res_{e_1}(\omega)=-\res_{e_2}(\omega).
 $$
\end{lemma}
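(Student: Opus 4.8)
The plan is to reduce the statement to a single explicit change of coordinate, namely the inversion $t\mapsto t^{-1}$, and then read off the residue at each end from the resulting Laurent expansion. Since the previous lemma guarantees that $\res_{(A,e)}$ does not depend on the choice of orientation-preserving coordinate, I am free to pick the most convenient coordinate for each of the two ends, and the whole argument reduces to one reindexing.

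First I would fix a coordinate $t$ identifying $A$ with $A(0;r_1,r_2)$. By the orientation convention, $t$ is an orientation-preserving coordinate for one of the two ends, say $e_2$, so that writing $\omega=\sum_{i\in\Z}\alpha_i\, t^i\, dt$ we have, by definition, $\res_{e_2}(\omega)=\alpha_{-1}$. The goal is then to produce an orientation-preserving coordinate for the other end $e_1$ and compute the corresponding $(-1)$-coefficient.

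Next I would introduce $s=t^{-1}$, which identifies $A$ with $A(0;r_2^{-1},r_1^{-1})$. The key orientation check is that the isomorphism $s=f(t)=t^{-1}$ is orientation reversing: in the normal form $f(t)=f_n\,t^n\,(1+h(t))$ recalled above, one has $n=-1$ and degree $d=1$, so $-n=d$ and $f$ reverses orientation. Consequently $s$ is an orientation-preserving coordinate for the other end $e_1$, and by the previous lemma $\res_{e_1}(\omega)$ may be computed as the coefficient of $s^{-1}\, ds$ in the $s$-expansion of $\omega$.

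Finally I would substitute $t=s^{-1}$, $dt=-s^{-2}\, ds$, so that $\omega=-\sum_{i\in\Z}\alpha_i\, s^{-i-2}\, ds$. The coefficient of $s^{-1}$ corresponds to the index $i=-1$, giving $\res_{e_1}(\omega)=-\alpha_{-1}=-\res_{e_2}(\omega)$, as claimed. Note that the substitution is merely the reindexing $i\mapsto -i-2$, a bijection of $\Z$, so no convergence issue arises beyond that of the original series on $A$. The only point requiring genuine care is the orientation bookkeeping in the middle step, namely verifying that inversion swaps the two ends and thereby accounts for the sign; the computation itself is a one-line reindexing once the correct orientation-preserving coordinate for $e_1$ has been identified.
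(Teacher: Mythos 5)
Your proof is correct and follows essentially the same route as the paper: both arguments take the inversion $s=t^{-1}$ as an orientation-reversing coordinate change and read off the sign from the substitution $dt=-s^{-2}\,ds$. Your explicit verification that inversion reverses orientation (via the normal form $f(t)=f_n t^n(1+h(t))$ with $n=-1$) is a small but welcome addition that the paper leaves implicit.
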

\begin{proof}
 Let $t:A\to A(0;r_1;r_2)$ be any orientation preserving coordinate on $(A,e_1)$ so that we can write $\omega=\sum_{i\in\Z}\alpha_i\, t^i\, dt$. Then $\res_{e_1}(\omega)=\alpha_{-1}$. To find $\res_{e_2}(\omega)$ it is enough to choose any orientation reversing coordinate on $(A,e_1)$ and in fact $s=\frac{1}{t}$ will do. The proof comes by rewriting $\omega$ in coordinate $s$ as follows
 $$
 \omega=\sum_{i\in\Z}\alpha_i\,\frac{1}{s^i}\,(-1)\frac{1}{s^2}\,ds.
 $$
\end{proof}

Now, let $W$ be a wide open curve and $\cE(W)$ its ends, $e\in \cE(W)$ and  $\omega\in\Omega_W$. Let further $A_1$ and $A_2$ be any open annuli in $W$ which belong to the end $e$. Then, naturally $e$ can be identified with an element in $\cE(A_1)$ and $\cE(A_2)$.

\begin{lemma}\label{lem: residue oposites}
 We have
 $$
 \res_{(A_1,e)}(\omega)=\res_{(A_2,e)}(\omega).
 $$
\end{lemma}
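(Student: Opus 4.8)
The goal is to show that the residue of $\omega$ along a fixed end $e$ of the wide open curve $W$ is intrinsic, independent of which annulus $A_1$ or $A_2$ (both belonging to $e$) we use to compute it. The plan is to reduce the comparison of the two annuli to a single annulus on which both are cofinal, and then invoke the invariance of the residue under orientation-preserving coordinate change, which is exactly the content of the first lemma above.

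First I would recall that $A_1$ and $A_2$ belong to the same end $e\in\cE(W)$. By the construction of $\cE(W)$ as $\cE'(W)/\!\sim$, the equivalence $A_1\sim A_2$ means they are comparable up to the chain relation generated by inclusion; concretely, there is a third annulus $A_3$ (a ``small'' annulus representing $e$, e.g.\ of the form $A(0;\rho,r_2)$ for $\rho$ close to $r_2$ in a coordinate near the end) with $A_3\subset A_1$ and $A_3\subset A_2$, and such that the inclusions $A_3\hookrightarrow A_1$ and $A_3\hookrightarrow A_2$ are orientation preserving with respect to $e$ (each inclusion sends the end of $A_3$ corresponding to $e$ to the end of $A_i$ corresponding to $e$). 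Hence it suffices to prove the equality $\res_{(A_i,e)}(\omega)=\res_{(A_3,e)}(\omega_{|A_3})$ for a single open subannulus $A_3$ that is cofinal at $e$, and the lemma follows by applying this twice and transitivity.

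Next I would establish that statement. Fix an orientation-preserving coordinate $t:A_i\to A(0;r_1,r_2)$ for the end $e$, so that $\omega=\sum_{j\in\Z}\alpha_j\,t^j\,dt$ converges on all of $A(0;r_1,r_2)$, and $\res_{(A_i,e)}(\omega)=\alpha_{-1}$ by definition. The restriction of $t$ to the subannulus $A_3$ gives an orientation-preserving coordinate on $(A_3,e)$, identifying $A_3$ with some $A(0;r_1',r_2)$ with $r_1\le r_1'$; the Laurent expansion of $\omega_{|A_3}$ in this same coordinate is literally the same series $\sum_j \alpha_j t^j\,dt$, merely convergent on the smaller range. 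Therefore the coefficient of $t^{-1}$ is unchanged and $\res_{(A_3,e)}(\omega_{|A_3})=\alpha_{-1}=\res_{(A_i,e)}(\omega)$. No coordinate change is even needed at this stage, only the observation that the residue is computed from a single Laurent coefficient that is stable under passing to a cofinal subannulus.

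The only genuine subtlety, and where the first lemma does the real work, is that the orientation-preserving coordinate witnessing cofinality at $e$ need not be the restriction of a single coordinate: when I pass from $A_1$ to $A_3$ to $A_2$, the coordinates on $A_1$ and $A_2$ are a priori unrelated. On $A_3$ I then have \emph{two} orientation-preserving coordinates, one inherited from $A_1$ and one from $A_2$, and the first lemma (the $\alpha_{-1}=\beta_{-1}$ statement) guarantees that the residue computed in either coordinate agrees. So the main obstacle to address carefully is ensuring that the inclusions are orientation preserving for the correct identifications of ends, and then the equality $\res_{(A_1,e)}(\omega)=\res_{(A_3,e)}(\omega)=\res_{(A_2,e)}(\omega)$ is immediate: the outer two equalities come from the cofinal-subannulus argument, and the well-definedness across the two different coordinates on $A_3$ is precisely the earlier lemma. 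This completes the proof.
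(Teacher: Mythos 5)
Your proof is correct and follows essentially the same route as the paper's: restrict to a smaller annulus representing the end, observe that the Laurent expansion (hence the coefficient $\alpha_{-1}$) is unchanged under restriction of the coordinate, and rely on the coordinate-independence lemma for well-definedness. The only difference is cosmetic: the paper reduces immediately to the nested case $A_1\subset A_2$, whereas you insert an explicit common cofinal sub-annulus $A_3$ to handle representatives of the end that are not directly nested.
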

\begin{proof}
 Since both $A_1$ and $A_2$ belong to the same end, we may suppose that $A_1\subset A_2$.  Then, $A_1$ is also an element of the end $e$ considered as the end of $A_2$. Hence, if $t$ is an orientation preserving coordinate on $(A_2,e)$ identifying $A$ with $A(0;r_1,r_2)$, the same coordinate identifies $A_1$ with an annulus of the form $A(0;r_1,r)$, with $r\in(r_1,r_2]$. If we write $\omega_{|A_2}=\sum_{i\in\Z}\alpha_i\,t^i\,dt$, then $\omega_{A_1}$ will have the same expansion, hence the proof.
\end{proof}
This result also shows that the following definition is correct.
\begin{defn}
 Let $W$ be a wide open curve, $\omega\in\Omega_W$ and $e\in \cE(W)$. We define the residue of $\omega$ at $e$ as 
 $$
 \res_{(W,e)}(\omega):=\res_{(A,e)}(\omega),
 $$
 where $A$ is any open annulus in $W$ that belongs to the end $e$.
 
 If $W$ is clear from the context, we may omit writing it in the index and just write $\res_e$.
\end{defn}

For the later purposes, we will need the following lemma.
\begin{lemma}
 Let $D$ be an open disc, $A$ its boundary annuli and $e\in \cE(A)$ which is different from the end $e'$ of $D$. Let $\omega$ be a meromorphic differential on $D$ having finitely many poles, say $a_1,\dots,a_n$, none of which belong to $A$ (that is, $\omega$ is a differential form on the wide open curve $D\setminus\{a_1,\dots,a_n\}$). Then,
 $$
 \sum_{i=1}^n\res_{a_i}(\omega)=\res_e(\omega_{|A}).
 $$
\end{lemma}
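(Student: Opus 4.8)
The plan is to reduce everything to an explicit Laurent-series computation after fixing a coordinate. Since $D$ is an open disc, I would choose a global coordinate $t$ identifying $D$ with an open disc $D(0;R)$, and let $b_i\in k$ be the value of $t$ at the type $1$ point $a_i$. The boundary annulus $A$ then becomes $A(0;\rho,R)$ for some $\rho<R$, and the hypothesis that no pole lies in $A$ means $|b_i|\le\rho$ for all $i$ (since $a_i\in D$ forces $|b_i|<R$, and $a_i\notin A$ rules out $\rho<|b_i|<R$). By construction the end $e'$ of $D$ is the end of $A$ toward the outer radius $R$, so the end $e\neq e'$ is the one toward the inner radius $\rho$; this is precisely the default orientation of $A(0;\rho,R)$, hence $t$ is an orientation preserving coordinate for $(A,e)$. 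Writing $\omega_{|A}=\sum_{j\in\Z}\gamma_j\,t^j\,dt$, the statement therefore amounts to $\gamma_{-1}=\sum_{i=1}^n\res_{a_i}(\omega)$.

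Next I would decompose $\omega$ into principal parts. Write $\omega=g\,dt$ for a meromorphic function $g$ on $D$ with poles exactly at the $a_i$. At each $a_i$ the function $g$ has a finite principal part $P_i=\sum_{j=1}^{m_i}c_{i,j}\,(t-b_i)^{-j}$, a rational function with a single pole at $b_i$ and with $c_{i,1}=\res_{a_i}(\omega)$. The difference $g-\sum_{i=1}^n P_i$ has a removable singularity at every $a_i$ (the principal part is killed at $a_i$, while each $P_{i'}$ with $i'\neq i$ is already holomorphic there), hence extends to a function $g_0$ holomorphic on all of $D$. Thus $\omega=\omega_0+\sum_{i=1}^n P_i\,dt$ with $\omega_0=g_0\,dt$ holomorphic on $D$. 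Since extracting the coefficient $\gamma_{-1}$ is linear, I may treat the summands separately.

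The holomorphic part contributes nothing: $g_0$ is a power series $\sum_{j\ge0}\beta_j\,t^j$ on $D(0;R)$, so its Laurent expansion on $A$ involves only non-negative powers of $t$ and the coefficient of $t^{-1}\,dt$ vanishes. For each principal part, on $A$ one has $|t|>|b_i|$ (at the skeleton point of radius $r\in(\rho,R)$ we get $|t|=r>\rho\ge|b_i|$), so the geometric expansion $(t-b_i)^{-j}=\sum_{l\ge0}\binom{j+l-1}{l}b_i^{\,l}\,t^{-j-l}$ converges throughout $A$. The coefficient of $t^{-1}$ forces $j+l=1$, i.e. $j=1$ and $l=0$, so only the simple-pole term survives and contributes exactly $c_{i,1}=\res_{a_i}(\omega)$. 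Summing over $i$ yields $\gamma_{-1}=\sum_{i=1}^n\res_{a_i}(\omega)$, as claimed.

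The computation itself is elementary; the points that need care are the orientation bookkeeping — ensuring that $e$ really is the inner end so that $t$ is genuinely orientation preserving and no global sign is lost — and the justification that the principal-part decomposition is valid for a meromorphic (a priori only analytic, not rational) $g$, for which the removable-singularity argument above suffices. Alternatively one could first approximate $\omega$ by rational differentials via Theorem \ref{thm: runge} and invoke the classical partial-fraction identity, but the direct argument seems cleaner and avoids having to control residues under approximation. I expect the orientation and convergence check on $A$ to be the only genuine subtlety.
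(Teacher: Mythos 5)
Your argument is correct and rests on the same structural idea as the paper's proof: split $\omega$ into a differential holomorphic on all of $D$ plus one principal part per pole, and use linearity of the operation extracting the $t^{-1}$-coefficient on $A$. The difference is in how each principal part is handled. The paper quotes the $p$-adic Mittag-Leffler decomposition and then finishes abstractly, combining the fact that the residues at the two ends of an annulus are opposite with Lemma \ref{lem: residue oposites} applied along the common end $e'$ of $A$ and $D\setminus\{a_i\}$; you instead build the decomposition by hand from the finite principal parts $P_i$ (justified by the removable-singularity argument) and evaluate each contribution explicitly through the geometric expansion of $(t-b_i)^{-j}$ on the region $|t|>|b_i|$, which is valid on all of $A$ since $|b_i|\le\rho$. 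Your orientation bookkeeping is right: $e$ is the inner end of $A(0;\rho,R)$, which is the paper's default orientation, so $t$ is orientation preserving and no sign is lost; and $c_{i,1}=\res_{a_i}(\omega)$ because $t-b_i$ is an orientation-preserving coordinate at the puncture. The one caveat is that the parenthetical in the statement suggests the lemma should apply to an arbitrary differential form on the wide open curve $D\setminus\{a_1,\dots,a_n\}$, where the singularities need not be of finite order; your computation extends verbatim to that case once $P_i$ is replaced by the full $p$-adic Mittag-Leffler tail $\sum_{j\ge 1}c_{i,j}\,(t-b_i)^{-j}$ convergent for $|t-b_i|>0$, the resulting double series being rearrangeable by non-archimedean summability, so nothing essential is lost either way.
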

\begin{proof}
 Let $t$ be a coordinate on $D$ (hence also on $A$). By the $p$-adic Mittag-Leffler decomposition, $\omega$ can be written as
 $$
 \omega=f_0(t)\,dt+\sum_{i=1}^nf_i(t)\,dt,
 $$
 where $f_0$ is a holomorphic function on $D$ and each $f_i(t)$ is a holomorphic function on the annulus $D\setminus\{a_i\}$. In particular,
 $\res_e(\omega)=\sum_{i=1}^n\res_e(f_i(t)\,dt)$ (since residue is clearly additive). Finally, for each $i=1,\dots,n$, we have by Lemma \ref{lem: residue oposites} that $\res_{a_i}(f_i(t)\,dt)=-\res_{e'}(f_i(t)\,dt)=\res_e(f_i(t)\,dt)$.
\end{proof}

\section{Residue theorem}
The results in this section are more or less contained in \cite{Col89}. However, the proofs seem to be new and do not rely on the cohomological results as in \lc but rather on an approximation argument together with the classical results valid for $k$-algebraic curves.

\begin{thm}(Compare with \cite[Proposition 4.3.]{Col89})\label{thm: residue}
 Let $W$ be wide open curve and $\omega\in\Omega_W$. Then
 $$
 \sum_{e\in\cE(W)}\res_e(\omega)=0.
 $$
\end{thm}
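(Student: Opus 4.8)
The plan is to deduce the identity from the classical residue theorem on the smooth projective algebraic curve underlying $W$, transporting it to the Berkovich setting through the $p$-adic Runge approximation of Theorem \ref{thm: runge}. First I would fix a simple projectivization $X'$ of $W$ and write $W=X'\setminus\bigcup_{i=1}^n\overline{D_i}$, where the $\overline{D_i}$ are the removed closed discs; by the discussion of ends, $\cE(W)=\{e_1,\dots,e_n\}$, where $e_i$ is the end of $W$ pointing into $\overline{D_i}$, so that the statement to prove becomes $\sum_{i=1}^n\res_{e_i}(\omega)=0$.

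Next I would build the affinoid on which to approximate. Choose open discs $U_i$ with $\overline{D_i}\subset U_i$ and radii in $|k^\times|$, and a rational point $c_i\in D_i(k)\subset U_i(k)$; set $X:=X'\setminus\bigcup_i U_i$, a strict $k$-affinoid curve contained in $W$ whose Shilov points are the maximal points of the $U_i$ and which lie on boundary annuli $A_i$ representing the ends $e_i$ (that the annular residue does not depend on the chosen representative is Lemma \ref{lem: residue oposites}). Applying Theorem \ref{thm: runge} to the sheaf $\Omega$ on $\cX=\cX'\setminus\{c_1,\dots,c_n\}$, I obtain rational differentials $\eta_m$ on $\cX'$, with poles at most at the $c_i$, such that $\eta_m\to\omega$ in the spectral norm on $\Omega_X(X)$.

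For each fixed algebraic $\eta=\eta_m$ I would combine two classical facts. The residue theorem on the projective curve $\cX'$ gives $\sum_p\res_p(\eta)=0$, and since the only poles are the $c_i$ this reads $\sum_i\res_{c_i}(\eta)=0$ with ordinary residues. On the other hand, applying the final lemma of the previous subsection to the disc $U_i$ — on which the sole pole of $\eta$ is $c_i$, with boundary annulus $A_i$ — identifies $\res_{c_i}(\eta)=\res_{e_i}(\eta_{|A_i})$, the end $e_i$ being the one pointing into $U_i$, towards $\overline{D_i}$. Summing over $i$ then yields $\sum_i\res_{e_i}(\eta_m)=0$ for every $m$.

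Finally I would pass to the limit, and here lies essentially the only subtlety: that each functional $\res_{e_i}$ is continuous for the spectral norm. Writing $\eta_{|A_i}=\sum_j\alpha_j\,t^j\,dt$ in an orientation-preserving coordinate, the bound $|\alpha_{-1}|\leq\rho\,|\eta|_{\zeta_\rho}\leq\rho\,\|\eta\|_X$ for any radius $\rho$ with $\zeta_\rho\in A_i\subset X$ shows $|\res_{e_i}(\eta-\omega)|\to0$, whence $\sum_i\res_{e_i}(\omega)=\lim_m\sum_i\res_{e_i}(\eta_m)=0$. The points to verify with care are the matching of orientations (so that the ordinary residue at $c_i$ really equals the annular residue at $e_i$ with the correct sign) and that the approximating affinoid can be chosen with its Shilov points lying on the annuli $A_i$; granting these, the remainder is a routine continuity argument.
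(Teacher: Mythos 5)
Your proposal is correct and follows essentially the same route as the paper: form a strict affinoid $X\subset W$ whose complement consists of discs containing rational points $c_i$, approximate $\omega$ by rational differentials via Theorem \ref{thm: runge}, convert the classical residue identity $\sum_i\res_{c_i}(\eta_m)=0$ into the annular statement using Lemma \ref{lem: residue oposites} and the disc lemma, and conclude by the continuity bound $|\alpha_{-1}|\le\rho\,\|\eta\|_X$. The orientation bookkeeping and the placement of the Shilov points on representative annuli, which you rightly flag as the points needing care, are handled in the paper exactly as you describe.
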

\begin{proof}
Let $e_1,\dots,e_n$ be all the ends of $W$, and for each $i=1,\dots,n$, let us choose $A'_i, A''_i\in e_i$ with $A''_i\subsetneq A'_i$. Let $x_i$ be the endpoint of $A''_i$ in $A'_i$ and let $A_i$ be a boundary open annulus in $A_i'$ with an endpoint $x_i$ and which belongs to the other end $e_i'$ of $A_i'$. Let $X$ be the $k$-affinoid curve $W\setminus\cup_{i=1}^nA''_i$ (note that it is a finite curve which does not have ends, hence is compact, hence affinoid). Let $X'$ be a simple projectivization of $W$ (hence of $X$ as well), and let $D_1,\dots,D_n$ be open discs/connected components of $X'\setminus X$ indexed in such a way that $A''_i$ is a boundary annulus of $D_i$. Finally, for $i=1,\dots,n$ let $a_i\in D_i$ be a rational point. Let $\cX:=\cX'\setminus\{a_1,\dots,a_n\}$, where $\cX'$ is the smooth projective $k$-algebraic curve associated to $X'$. 

By $p$-adic Runge's theorem \ref{thm: runge}, the $\cO_{\cX}$-module $\Omega_{\cX}(\cX)$ of regular rational differential forms on $\cX$ is dense in $\Omega^\dagger_X$. Hence, there is a sequence $(\omega_m)_{m\in\N}$ of differential forms in $\Omega_{\cX}(\cX)$ such that $|\omega_m-\omega|_X<\frac{1}{m}$. In particular, $\sup_{x\in A_i}|\omega_m-\omega|_x<\frac{1}{m}$.

Let $t_i:A_i\to A(0;r_i,r_i')$ be an orientation preserving coordinate on $(A_i,f_i)$, where $f_i$ is the end of $A_i$ such that each annulus in $f_i$ is attached to $x_i$, and let us fix $\rho_i\in (r_i,r_i')$ and let $\rho:=\max\{\rho_1,\dots,\rho_n\}$. Let us put 
$$\omega_{|A_i}=\sum_{j\in\Z}\alpha_{i,j}\,t_i^j\,dt_i\quad \text{and }(\omega_m)_{|A_i}=\sum_{j\in\Z}\beta_{m,i,j}\,t_i^j\,dt_i,\quad i=1,\dots,n.
$$
By what we said above, $|\alpha_{i,-1}-\beta_{m,i,-1}|<\frac{1}{m}\,\rho_i$, so 
\begin{equation}\label{eq: first relation}
|\sum_{i=1}^n\alpha_{i,-1}-\sum_{i=1}^n\beta_{m,i,-1}|<\frac{\rho}{m},\quad m=1, 2\dots.
\end{equation}
 But, $\beta_{m,i,-1}=\res_{f_i}(\omega_m)_{|A_i}=-\res_{e'}(\omega_m)_{|A_i}=\res_{a_i}(\omega_m)_{|(D_i\cup A_i')\setminus\{a_i\}}=\res_{a_i}(\omega_m)$, where the last expression is the classical residue of $\omega_m$ at $a_i$ and we used Lemma \ref{lem: residue oposites} and the fact that $D_i\cup A_i'$ is an open disc, by construction. By the algebraic residue theorem, it follows that $\sum_{i=1}^n\beta_{m,i,-1}=0$ so the equation \eqref{eq: first relation} becomes
 \begin{equation}
  |\sum_{i=1}^n\alpha_{i,-1}|<\frac{1}{m}
 \end{equation}
Similarly, $\alpha_{i,-1}=\res_{f_i}(\omega_{|A_i})=-\res_{e_i'}(\omega_{|A_i})=\res_{e_i}(\omega)$. So, finally
$$
|\sum_{e\in\cE(W)}\res_e(\omega)|<\frac{1}{m}, \quad,m=1,2,\dots,
$$
 which finishes the proof.
\end{proof}
The following two corollaries can be seen as non-archimedean versions of the classical inside-outside residue theorems.
\begin{cor} Let $X$ be a finite curve, $X'$ its simple projectivization and $W$ some wide open curve in $X'$ that contains $X$. Let $\omega\in\Omega_W$ \footnote{In other words, let $\omega$ be an {\em overconvergent} differential form on $X$}. Then,
 $$
 \sum_{e\in \cE(X)}\res_e(\omega)=\sum_{e\in \cE^o(X)}\res_e(\omega)
 $$
\end{cor}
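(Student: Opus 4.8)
The plan is to realise both sides as residues at ends of a single auxiliary wide open curve and then invoke the residue theorem (Theorem \ref{thm: residue}). First I would fix the simple projectivization $X'$ and write $X = X' \setminus \big(\bigcup_i D_i \cup \bigcup_j \overline{C_j}\big)$, where the $D_i$ are the open discs and the $\overline{C_j}$ the closed discs among the connected components of $X'\setminus X$. With this notation the two index sets in the statement become transparent: by the description of boundary annuli, $\cE(X)$ consists of exactly one end $e_j^X$ per removed closed disc $\overline{C_j}$ (the end pointing into $\overline{C_j}$), whereas $\cE^o(X) = \bigsqcup_i \cE(D_i)$ consists of exactly one outer end $e_i^{\mathrm o}$ per removed open disc $D_i$ (the end of $D_i$ pointing toward its boundary point $x_i = \overline{D_i}\setminus D_i \in X$).

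Next I would replace $W$ by a more convenient wide open $W_0$ with $X \subseteq W_0 \subseteq W$. Since $x_i \in X \subseteq W$ and every removed disc of $W$ lies in $X'\setminus X$, inside each $D_i$ the set $(X'\setminus W)\cap D_i$ is a finite union of closed discs staying strictly inside $D_i$, hence contained in a single closed disc $\overline{C_i'}\subsetneq D_i$; I then set $W_0 := X' \setminus \big(\bigcup_i \overline{C_i'} \cup \bigcup_j \overline{C_j}\big)$. A routine check gives $X\subseteq W_0 \subseteq W$, so $\omega$ restricts to $W_0$, and $\cE(W_0)$ consists of precisely one end toward each $\overline{C_i'}$ and one end toward each $\overline{C_j}$. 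Crucially, all the residues appearing in the statement are computed on annuli lying in $X$ or in $D_i\setminus\overline{C_i'}$, all of which sit inside $W_0$, so passing from $W$ to $W_0$ changes none of them.

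The heart of the argument is the end-by-end matching. For each closed disc $\overline{C_j}$, the $W_0$-end toward $\overline{C_j}$ is represented by the very same boundary annulus of $X$ as $e_j^X$, so its residue equals $\res_{e_j^X}(\omega)$. For each open disc $D_i$, the $W_0$-end toward $\overline{C_i'}$ is one end of the annulus $D_i\setminus\overline{C_i'}$, whose opposite end is $e_i^{\mathrm o}$ (the two ends being oriented respectively toward $\overline{C_i'}$ and toward $x_i$); using the lemma that gives $\res_{e_1}(\omega) = -\res_{e_2}(\omega)$ for the two ends of an annulus, together with the well-definedness of $\res_{e_i^{\mathrm o}}$ (Lemma \ref{lem: residue oposites} applied to the wide open disc $D_i$), the residue of this $W_0$-end equals $-\res_{e_i^{\mathrm o}}(\omega)$. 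Summing over all ends of $W_0$ and applying Theorem \ref{thm: residue} then yields $\sum_j \res_{e_j^X}(\omega) - \sum_i \res_{e_i^{\mathrm o}}(\omega) = 0$, which is exactly the asserted identity $\sum_{e\in\cE(X)}\res_e(\omega) = \sum_{e\in\cE^o(X)}\res_e(\omega)$.

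I expect the main obstacle to be the orientation bookkeeping: one must consistently track which end of each annulus is the inner and which the outer one, and verify that the minus sign appears for the $D_i$-ends while no sign appears for the $\overline{C_j}$-ends, since a single misplaced sign would turn the correct identity into a false one. A secondary, more routine point is justifying the reduction to $W_0$, in particular that each component of $X'\setminus W$ is confined to a single component of $X'\setminus X$ and that the residue-carrying annuli genuinely survive inside $W_0$.
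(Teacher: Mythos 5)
Your argument is correct and is essentially the paper's own proof: the paper likewise shrinks $W$ to a wide open curve whose complement of $X$ is a disjoint union of annuli attached to the boundary of $X$ (your explicit $W_0=X'\setminus\bigl(\bigcup_i \overline{C_i'}\cup\bigcup_j \overline{C_j}\bigr)$), and then applies Theorem \ref{thm: residue} to that curve together with the sign flip between the two ends of each annulus $D_i\setminus\overline{C_i'}$. The orientation bookkeeping you single out is exactly where the paper also invokes Lemma \ref{lem: residue oposites}, and your signs come out as in the paper.
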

\begin{rmk}
 Since $\omega\in \Omega_W$, for each outer end $e\in\cE^0(X)$, there is an annulus $A_e\in e$ where $\omega$ is defined. Then, it makes sense to write $\res_e(\omega)$ as it does not depend on the chosen $A_e$.
\end{rmk}
\begin{proof}
 By Shrinking $W$ if necessary we may assume that $W\setminus X$ is a disjoint union of open annuli all of which are attached to the boundary of $X$. That is, we may assume that $W\setminus X$ is a disjoint union of annuli that belong to different outer ends of $X$, and each end has its representative. 
 
 Let $A_1,\dots,A_n$ be the connected components of $W\setminus X$ and suppose $A_i\in e_i$, where $e_i\in\cE^o(X)$, for $i=1,\dots,n$. Then, $e_i\in\cE(A_i)$ and let us denote by $e'_i$ the other end of $A_i$. The Theorem \ref{thm: residue} together with Lemma \ref{lem: residue oposites} implies
 \begin{align*}
 0&=\sum_{e\in \cE(W)}\res_e(\omega)=\sum_{e\in\cE(X)}\res_e(\omega)+\sum_{i=1}^n\res_{(A_i,e_i')}(\omega)=\sum_{e\in\cE(X)}\res_e(\omega)-\sum_{i=1}^n\res_{(A_i,e)}(\omega)\\
 &=\sum_{e\in\cE(X)}\res_e(\omega)-\sum_{e\in\cE^0(X)}\res_e(\omega).
 \end{align*}
\end{proof}

\begin{cor}
 Let $U,\,V$ and $W$ be wide open curves with $U\cup V=W$. Suppose further that each connected component of $U\cap V$ is an open annulus, and let $\omega\in\Omega_W$. Then,
 $$
 \sum_{e\in\cE(U)\cap \cE(W)}\res_e(\omega)=-\sum_{e\in\cE(V)\cap \cE(W)}\res_e(\omega).
 $$
\end{cor}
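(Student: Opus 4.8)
The plan is to reduce the statement to two applications of the residue theorem (Theorem \ref{thm: residue}), one for $U$ and one for $V$, and to arrange that the contributions coming from the overlap $U\cap V$ cancel in pairs. First I would write $U\cap V=A_1\sqcup\dots\sqcup A_m$ as its finitely many connected components, each of which is an open annulus by hypothesis, so each $A_j$ carries exactly two ends.

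The heart of the argument is a bookkeeping of the ends. I would show that, in this configuration, each component $A_j$ contributes exactly one end $f_j$ to $U$ and one end $g_j$ to $V$: moving outward along $A_j$ through one end $f_j$ one leaves $U$ and enters $V\setminus U$, so that $f_j\in\cE(U)\setminus\cE(W)$, while through the other end $g_j$ one leaves $V$ and enters $U\setminus V$, so that $g_j\in\cE(V)\setminus\cE(W)$. Conversely every end of $U$ that is not an end of $W$ must be approached through the overlap and is therefore one of the $f_j$, and symmetrically for $V$; thus
\[
\cE(U)=\big(\cE(U)\cap\cE(W)\big)\sqcup\{f_1,\dots,f_m\},\qquad
\cE(V)=\big(\cE(V)\cap\cE(W)\big)\sqcup\{g_1,\dots,g_m\}.
\]
Since $f_j$ and $g_j$ are precisely the two ends of the single annulus $A_j$, the lemma asserting that the residues at the two ends of an open annulus are opposite (the Lemma preceding Lemma \ref{lem: residue oposites}) gives $\res_{f_j}(\omega)=-\res_{g_j}(\omega)$ for every $j$.

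With this in hand the conclusion is immediate. Applying Theorem \ref{thm: residue} to $U$ and to $V$ yields
\[
\sum_{e\in\cE(U)\cap\cE(W)}\res_e(\omega)+\sum_{j=1}^m\res_{f_j}(\omega)=0,
\qquad
\sum_{e\in\cE(V)\cap\cE(W)}\res_e(\omega)+\sum_{j=1}^m\res_{g_j}(\omega)=0,
\]
and adding these two identities the overlap sums cancel, since $\res_{f_j}(\omega)+\res_{g_j}(\omega)=0$ for each $j$; what remains is exactly
\[
\sum_{e\in\cE(U)\cap\cE(W)}\res_e(\omega)=-\sum_{e\in\cE(V)\cap\cE(W)}\res_e(\omega).
\]

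The one genuinely delicate point, and the step I expect to be the main obstacle, is the structural claim about the ends: one must verify that each overlap annulus $A_j$ has exactly one end interior to $V$ and the other interior to $U$, that these exhaust $\cE(U)\setminus\cE(W)$ and $\cE(V)\setminus\cE(W)$ respectively, and that no end of $W$ is simultaneously approached through both $U$ and $V$ (which would otherwise be double-counted and break the formula). This is precisely where the hypothesis that the components of $U\cap V$ are annuli, together with the nondegeneracy of the decomposition $W=U\cup V$, is really used, and I expect the cleanest way to settle it is to pass to a common simple projectivization of $W$ and track each end via the removed closed disc to which its boundary annuli are attached.
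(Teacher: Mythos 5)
Your proposal is correct, but it follows a genuinely different route from the paper's. The paper applies Theorem \ref{thm: residue} only \emph{once}, to $W$ itself: it shows that $U$ and $V$ can share no end (a common end would contain a common boundary annulus, hence a whole connected component $A$ of $U\cap V$ lying in that end class; the single extra point in the closure of $A$, attached to the \emph{other} end of $A$, would then belong to both $U$ and $V$, hence to $U\cap V$, hence to $A$ itself since components of an open set are closed in it --- a contradiction), so that $\cE(W)$ is partitioned as $\bigl(\cE(U)\cap\cE(W)\bigr)\sqcup\bigl(\cE(V)\cap\cE(W)\bigr)$ and the identity drops out of $\sum_{e\in\cE(W)}\res_e(\omega)=0$. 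You instead run a Mayer--Vietoris-style argument: two applications of the residue theorem, to $U$ and to $V$, with the interior contributions cancelled in pairs by the (unnumbered) lemma that the residues at the two ends of an open annulus are opposite. Your route proves more --- a full description of $\cE(U)$ and $\cE(V)$ in terms of the overlap annuli --- but at the cost of exactly the bookkeeping you flag as delicate; the paper's route needs only the weaker disjointness statement and one application of the theorem. For what it is worth, the verification you defer does not require passing to a projectivization: the same elementary observation the paper uses (the endpoint of a component $A_j$ of $U\cap V$ at either end, when it exists in $W$, cannot lie in $U\cap V$, so it lies in exactly one of $U\setminus V$ or $V\setminus U$) settles it, the only residual cases being the degenerate ones where a component of $U\cap V$ equals $U$ or $V$ (so $U\subseteq V$ or $V\subseteq U$), in which your pairing collapses but the claimed identity reduces directly to Theorem \ref{thm: residue}.
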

\begin{proof}
 It is enough to note that $\cE(U)\cap \cE(W)$ and $\cE(V)\cap \cE(W)$ are disjoint, that is, that $U$ and $V$ do not have any common ends under the conditions of the theorem. The result will then follow from Theorem \ref{thm: residue}
 
 If $U$ and $V$ do have a common end, say $e'$, it means that there is a common open annulus, say $A'$, that belongs to this end $e'$ and consequently there is an annulus $A$ which is a connected component of $U\cap V$ that contains $A'$ (and that is also in $e'$). The closure of $A$ in both $U$ and $V$ contains in addition only one point which is necessarily attached to the other end of $A$, hence must belong to both $U$ and $V$, which is a contradiction.
\end{proof}

% \section{De Rham cohomology of wide open curves}
% For this section let $W$ be a wide open curve, $X'$ its simple projectivization such that $X'\setminus W$ is a disjoint union of open discs $D_1,\dots, D_n$. For each $D_i$, we choose a rational point $a_i\in D_i$, and let us denote  by $\cX$ the affine (algebraic) curve $\cX'\setminus\{a_1,\dots,a_n\}$, where the analytification of $\cX'$ is $X'$. 
% 
% XXXX
% 
% Since $\cX$ is affine, the higher sheaf cohomology groups vanish, hence the hypercohomology spectral sequence degenerates at the level one leaving us with 
% $$
% \cH^i_{dR}(\cX)=H^i(0\to \cO_{\cX}(\cX)\to \Omega_{\cX}(\cX)).
% $$
% Similarly, $W$ is a quasi-stein space, hence again the higher sheaf cohomology groups vanish leaving us with
% $$
% H^i_{dR}=H^i(0\to \cO_W\to \Omega_W).
% $$
% Clearly, we have a natural injective map
% \begin{equation}\label{eq: coho map}
%  \cH^1_{dR}(\cX)\to H^1_{dR}(W).
% \end{equation}
% The point of this section is
% \begin{thm}
%  The map \eqref{eq: coho map} is an isomorphism. In particular, $\dim_kH^1_{dR}(W)$ is finite.
% \end{thm}
% \begin{proof}
%  
% \end{proof}

\section{Mittag-Leffler problems}\label{s: M-L}

Let $W$ be a wide open curve, let $e_1,\dots,e_n$ be its ends and let $A_i$ be an open annulus in $e_i$. Let $X'$ be a simple projectivization of $W$, let $D'_1,\dots,D'_n$ be connected components of $X'\setminus W$ indexed so that the closure of $A_i$ in $X'$ intersects $D'_i$. Let $D_i$ be the open disc $A_i\cup D'_i$, and $a_i\in D'_i(k)$ a rational point. As usual, we will denote the corresponding $k$-algebraic curve for $X'$ by $\cX'$.

Now, for each $i=1,\dots,n$ let $t_i$ be a coordinate on the disc $D_i$ centered at $a_i$ (that is, $t_i(a_i)=0$) which identifies $A_i$ with $A(0;r_i,r_i')$. Let further $f_i:=f_i(t_i)=\sum_{j\leq j(i)}f_{i,j}\,t_i^j$ be a holomorphic function on $A_i$ and let $\omega_i=\sum_{j\leq j(i)}\alpha_{i,j}\,t_i^j\,dt_i$ be a differential form on $A_i$.

The {\em Mittag-Leffler problem for the data $\cD:=\{(A_i,f_i)\mid i=1,\dots,n\}$} is to find  a holomorphic function $f$ on $W$ whose restriction on $A_i$ is $f_i$.

The {\em Mittag-Leffler problem for the data $\cD^1:=\{(A_i,\omega_i)\mid i=1,\dots,n\}$} is to find a differential form $\omega$ on $W$ whose restriction on $A_i$ is $\omega_i$. 

\begin{rmk}\label{rmk: classic M-L}
 Suppose that each of the functions $f_i$ (resp. differential forms $\omega_i$) has only finitely many terms. Then, $f_i$ (resp. $\omega_i$) is convergent on the whole punctured open disc $D_i\setminus\{a_i\}$ and the Mittag-Leffler problem for the data $\cD$ (resp. $\cD^1$) is then just the classic Mittag-Leffler problem for meromorphic functions (resp. for meromorphic differentials) on $k$-algebraic curve $\cX'$.
\end{rmk}

To state our main results, let us denote by $\D$ the divisor $\sum_{i=1}^n(j(i)+1)\,[a_i]$ on $\cX'$ and by $\cL^{(1)}(\D)$ the space of meromorphic differentials $\omega$ on $\cX'$ which are holomorphic outside of the points $a_i$ and with $\ord_{a_i}(\omega)\geq -\ord_{a_i}(\D)$, for $i=1,\dots,n$.

\begin{thm}\label{thm: M-L funcitons}
 The following are equivalent:
 \begin{enumerate}
  \item The Mittag-Leffler problem for the data $\cD$ has a solution.
  \item For every differential form $\omega$ on $W$,
  $$
  \sum_{i=1}^n\res_{e_i}(f_i\,\omega)=0.
  $$
  \item For every differential form $\omega\in \cL^{(1)}(\D)$,
  $$
  \sum_{i=1}^n\res_{e_i}(f_i\,\omega)=0.
  $$
 \end{enumerate}
\end{thm}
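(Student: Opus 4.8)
The plan is to prove the cycle $(1)\Rightarrow(2)\Rightarrow(3)\Rightarrow(1)$, with the first two implications immediate and all the work concentrated in $(3)\Rightarrow(1)$.

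For $(1)\Rightarrow(2)$: if $f\in\cO(W)$ solves the problem, then $f\omega\in\Omega_W$ for every $\omega\in\Omega_W$; the ends of $W$ are precisely $e_1,\dots,e_n$, and on $A_i\in e_i$ one has $f\omega=f_i\omega$, so $\res_{e_i}(f\omega)=\res_{e_i}(f_i\omega)$. The residue theorem (Theorem \ref{thm: residue}) then gives $\sum_{i=1}^n\res_{e_i}(f_i\omega)=\sum_{e\in\cE(W)}\res_e(f\omega)=0$. For $(2)\Rightarrow(3)$ I only note that every $\omega\in\cL^{(1)}(\D)$ has its poles among the points $a_i$, none of which lie in $W$, so $\omega$ restricts to a form in $\Omega_W$ and (3) is the special case of (2) for these forms.

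For $(3)\Rightarrow(1)$ I would build $f$ by a convergent successive-approximation scheme whose single step combines the classical Mittag--Leffler theorem on $\cX'$ with $p$-adic Runge's theorem. Fix a closed sub-annulus $A_i^{\flat}\subset A_i$ in each end on which to measure sizes, and split the datum $f_i$ into its genuinely singular part toward $a_i$ (the negative powers of $t_i$) and its regular part toward the interior. The construction step is: given data $(u_i)$ on the $A_i$ whose negative part is a Laurent polynomial and which satisfies the residue condition of (3), the classical Mittag--Leffler problem for functions on the projective curve $\cX'$ with poles and jets bounded by $\D$ (Remark \ref{rmk: classic M-L}) has, by Riemann--Roch and Serre duality, a solvability criterion that is exactly the vanishing of $\sum_i\res_{a_i}(u_i\omega)$ against all $\omega\in\cL^{(1)}(\D)=H^0(\cX',\Omega_{\cX'}(\D))$; this produces a rational function $g$ on $\cX'$ with poles only at the $a_i$, hence holomorphic on $W$, matching $u_i$ through order $j(i)$ at each $a_i$. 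I would then invoke Runge's theorem (Theorem \ref{thm: runge}) to approximate, on the compact sets $A_i^{\flat}$, the remaining convergent tail of $u_i$ by a further rational function with poles at the $a_i$, again holomorphic on $W$. The key self-improvement is that the new residual $u_i'=u_i-(\text{restriction of the two rational pieces})$ is smaller than $u_i$ by a definite factor on $A_i^{\flat}$, and that it again satisfies the residue condition: indeed each rational piece $g$ is holomorphic on $W$, so by the residue theorem $\sum_i\res_{e_i}(g|_{A_i}\,\omega)=\sum_{e\in\cE(W)}\res_e(g\omega)=0$, whence the residue defect of $u_i'$ equals that of $u_i$, namely $0$. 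Iterating and summing the telescoping corrections, I would obtain $f=\sum_k g^{(k)}\in\cO(W)$; the spectral-norm bookkeeping via \eqref{eq: bd sup} is meant to show that the parasitic positive-order terms carried by each individual $g^{(k)}$ cancel in the limit, so that $f|_{A_i}=f_i$ exactly even though no single $g^{(k)}$ has bounded positive powers.

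The main obstacle is precisely this convergence-and-exactness step. Two things must be controlled simultaneously: first, that the combination of the classical solve and the Runge approximation genuinely contracts the residual on the fixed compact annuli while not re-introducing error in the singular direction toward the non-precompact ends; and second, that the corrections can be summed in a single norm so that the limit lands in $\cO(W)$ (not merely in the sections over an affinoid exhaustion) and restricts on the nose to the prescribed $f_i$, including the vanishing of all powers above $j(i)$. The non-precompactness of the boundary annuli $A_i$ is exactly what provides the room to carry out the interior corrections, and Runge's theorem is what makes those corrections available and small; marrying this with the residue criterion --- which is what lets the classical step succeed at every stage --- is where the real work lies.
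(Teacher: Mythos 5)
Your implications $(1)\Rightarrow(2)$ and $(2)\Rightarrow(3)$ are exactly the paper's argument, and your observation that subtracting the restriction of a function holomorphic on $W$ preserves the residue condition (via Theorem \ref{thm: residue}) is correct and is indeed the mechanism that keeps the finite-stage problems solvable. But the core of $(3)\Rightarrow(1)$ has a genuine gap, and the specific device you propose to fill it fails. The tuple of ``remaining convergent tails'' $(u_1,\dots,u_n)$ lives on the disjoint union of the closed annuli $A_i^{\flat}$, and you cannot approximate it by rational functions with poles only at the $a_i$: such functions are holomorphic on the connected region of $W$ separating the annuli, and Theorem \ref{thm: runge} only gives density of their restrictions when every complementary component of the affinoid carries an allowed pole. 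A two-ended example already kills it: for $W=A(0;r,R)$ with $a_1=0$, $a_2=\infty$, the pair $(0,\,t^{-N})$ on the two boundary annuli cannot be approximated by global Laurent series on $W$, since the coefficient of $t^{-N}$ of any approximant would have to be simultaneously close to $0$ (scaled by $\rho_1^{N}$) and close to $1$ (scaled by $\rho_2^{N}$). Approximability of the tail data by functions on $W$ is essentially the statement being proved, so this step is circular; and the claimed ``definite contraction factor'' per iteration is asserted, not established.

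What is actually needed, and what the paper supplies, is a way to replace each $f_i$ by a Laurent polynomial datum that \emph{exactly} satisfies the finitely many residue conditions against a basis $\sigma_1,\dots,\sigma_m$ of $\cL^{(1)}(\D)$. Truncating $f_i$ at level $l$ destroys condition \eqref{eq: new conditions} by a defect $\beta_{j,l}$ that tends to $0$, and one restores it by adding $m$ monomials whose coefficients solve the $m\times m$ system \eqref{eq: sys 2}; the existence of an invertible $m\times m$ minor of the coefficient matrix of the $\sigma_j$ in sufficiently negative degrees is the content of Corollary \ref{cor: consequence} (proved via a nontrivial linear-algebra lemma on the vectors $A_{j,l}$ versus $A_{j,\infty}$), and Cramer's rule bounds the correction by $B_{l_m}M'/M$, so it does not perturb the limit. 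Only then does Lemma \ref{lem: phi_l} give solvable classical problems whose solutions form a Cauchy sequence for $|\cdot|_X$ by \eqref{eq: bd sup}, with uniqueness of solutions guaranteeing that the limit restricts to $f_i$ on the nose. Your proposal contains no analogue of this correction step, which is where the real work of the theorem lies.
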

\begin{thm}\label{thm: M-L differenitals}
 The Mittag-Leffler problem for the data $\cD^1$ has a solution if and only if 
 $$
 \sum_{i=1}^n\res_{e_i}(\omega_i)=0.
 $$
\end{thm}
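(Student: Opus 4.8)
The plan is to obtain necessity immediately from the residue theorem and sufficiency from an approximation argument that reduces the (infinite) Laurent data on the $A_i$ to the classical Mittag-Leffler problem on $\cX'$, as advertised in the introduction.

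Necessity is the easy direction. If $\omega\in\Omega_W$ solves the problem, then $\omega|_{A_i}=\omega_i$ and, since $A_i\in e_i$, the residue at the end $e_i$ is computed on any annulus of $e_i$, so $\res_{e_i}(\omega_i)=\res_{e_i}(\omega)$. As $\cE(W)=\{e_1,\dots,e_n\}$, Theorem \ref{thm: residue} yields $\sum_{i=1}^n\res_{e_i}(\omega_i)=\sum_{e\in\cE(W)}\res_e(\omega)=0$.

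For sufficiency I would first reduce to matching on fixed sub-annuli: choose $A_i''\Subset A_i$ of radii contained in $(r_i,r_i')$ and note that, by the identity principle on the connected annulus $A_i$, any $\omega\in\Omega_W$ with $\omega|_{A_i''}=\omega_i|_{A_i''}$ automatically satisfies $\omega|_{A_i}=\omega_i$. Next, on each $A_i''$ one truncates $\omega_i$ to a Laurent polynomial $P_i^{(M,N)}$ of degrees in $[-M,N]$; because the series converges on $A_i''$, the discarded head and tail have spectral norm on $A_i''$ tending to $0$ as $M,N\to\infty$, and keeping the degree $-1$ term preserves $\res_{e_i}$, so the truncations still satisfy $\sum_i\res_{e_i}(P_i^{(M,N)})=0$. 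The point is then to realize such \emph{finite} data \emph{exactly} by a form in $\Omega_W$: the principal parts (negative degrees) are prescribed by the classical Mittag-Leffler theorem for differentials on $\cX'$ — solvable precisely because the residues sum to zero — while the finitely many non-negative Taylor coefficients are prescribed by Riemann--Roch, allowing auxiliary poles placed at points of $\bigcup_k\overline{D_k'}$ \emph{outside} $W$ (chosen residue-free, so as not to disturb the residue balance via the residue theorem on $\cX'$) so that the resulting rational form still lies in $\Omega_W$. This produces $\theta\in\Omega_W$ agreeing with the truncation in degrees $[-M,N]$ on $A_i$.

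Finally I would run a successive-approximation scheme: after subtracting such a $\theta$ the discrepancy on each $A_i''$ is small (it consists only of the untouched tails) and still has residue sum zero, so one iterates. If the discrepancies can be made to decrease geometrically, their corrections sum in the complete space $\Omega_W$ to a form restricting to $\omega_i$ on $A_i''$, hence on $A_i$. The hard part will be exactly this convergence: it requires a bound on the realization operator of the previous paragraph — the spectral norm on $A_i''$ of the constructed $\theta$ controlled by the norm of the data, \emph{uniformly in the degrees} $M,N$. Equivalently, one must show that the restriction map $\Omega_W\to\prod_i\Omega_{A_i''}$ has closed image; combined with the density of its image in the hyperplane $\{(\eta_i):\sum_i\res_{e_i}(\eta_i)=0\}$ (which is what the Runge/Riemann--Roch construction supplies) and with the inclusion of the image in that hyperplane (necessity), closedness forces the image to be the whole hyperplane and finishes the proof. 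I expect this uniform control, which I would derive from the finite-dimensionality of the cokernel of the restriction map on the wide open curve $W$ (so that an open-mapping argument applies), to be the main technical obstacle, since a priori the norm of a form prescribing a Laurent jet of degree $N$ could grow with $N$ and cause the approximations to drift rather than converge.
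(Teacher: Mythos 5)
Your necessity argument and your overall strategy for sufficiency (truncate the $\omega_i$ to finite Laurent data with residue sum zero, realize the finite data by the classical Mittag--Leffler theorem on $\cX'$, pass to the limit) are the paper's. But your write-up leaves the decisive step unproved: the entire convergence of your scheme hangs on ``a bound on the realization operator \ldots uniformly in the degrees $M,N$,'' which you only conjecture and propose to extract from an unexecuted closed-image/open-mapping argument on $\Omega_W\to\prod_i\Omega_{A_i''}$. That is not a technical footnote --- it is the theorem. There is also a concrete defect in the realization step itself: a form $\theta$ produced by prescribing the jet in degrees $[-M,N]$ at $a_i$ via Riemann--Roch with auxiliary poles outside $W$ will in general have nonzero, uncontrolled coefficients in degrees $>N$ in its expansion on $A_i$; hence the discrepancy after one step is \emph{not} ``only the untouched tails'' but also contains this high-order part of $\theta$, and bounding it is again exactly the missing operator estimate. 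As written, the iteration has no reason to converge.

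The paper's proof of Theorem \ref{thm: M-L differenitals} is arranged so that no such estimate is ever needed. The approximant $\sigma_l$ is taken to restrict on $A_i$ to \emph{exactly} the truncation $\omega_{i,l}=\sum_{s=l}^{j(i)}\alpha_{i,s}\,t_i^s\,dt_i$ --- all coefficients outside $[l,j(i)]$ vanish --- so on each annulus the difference $\sigma_l-\sigma_{l'}$ is literally the finite tail $\sum_{l'\le s<l}\alpha_{i,s}\,t_i^s\,dt_i$, whose norm at the Shilov point $x_i$ is $\max_{l'\le s<l}|\alpha_{i,s}|(r_i'')^s\to 0$. Since by \eqref{eq: bd sup} the spectral norm on the affinoid $X$ is attained at the points $x_i$, which lie in the closures of the boundary annuli, this single computation already shows $(\sigma_l)$ is Cauchy in the Banach module $\Omega_X$: the maximum principle replaces your open-mapping argument, uniqueness of solutions (Remark \ref{rmks: M-L}, \emph{3.}) is what lets the global norm be read off from the boundary data, and there is no iteration at all --- one sequence of truncations converges directly, after which Remark \ref{rmks: M-L}, \emph{2.} extends the limit from $W'$ back to $W$. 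If you want to repair your version, the fix is to make your $\theta$ match the truncated expansion exactly on the $A_i$ (as the paper's classical Mittag--Leffler input is asserted to do), at which point the Shilov-boundary computation closes the argument in one line; note that your extra care with the non-negative coefficients is responding to a genuine delicacy in that exact-matching claim, but trading it for an unproven uniform operator bound does not yield a proof.
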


\begin{rmks}\label{rmks: M-L}
{\em 1.} Continuing Remark \ref{rmk: classic M-L}, if the functions $f_i$ (resp. differentials $\omega_i$) have finitely many terms, then Theorem \ref{thm: M-L differenitals} and the equivalence of {\em 1.} and {\em 3.} are just the classical statements concerning the Mittag-Leffler problems for meromorphic differentials and meromorphic functions on smooth $k$-algebraic curves (see for example \V{to add a citation}).   \\
{\em 2.} If we consider for each $i=1,\dots,n$, a subannulus $A'_i:=A(0;r_{i,1},r_{i,2})$ of $A_i$, with $r_i\leq r_{i,1}<r_{i,2}\leq r_i'$ and we put $W':=W\setminus \cup_{i=1}^n A(0;r_i,r_{i,1}]$, then the Mittag-Leffler problem for the data $\cD':=\{(A_i',f_i)\mid i=1,\dots,n\}$ (resp. $\cD'^1:=\{(A_i,\omega_i)\mid i=1,\dots,n\}$) has a solution (on $W'$) if and only if the Mittag-Leffler problem for $\cD$ (resp. $\cD^1$) has a solution (on $W$). Indeed, any solution for the data $\cD$ (resp. $\cD^1$) on $W$ is naturally a solution for $\cD'$ (resp. $\cD'^1$) by simply restricting it to $W'$. In the other direction, a solution on $W'$ for $\cD'$ (resp. $\cD'^1$) extends naturally to the whole $W$ because function $f_i$ converges on the whole open disc $A_i$.\\
{\em 3.} If there exists a solution to our Mittag-Leffler problem, it is unique. For example, if $F_1$ and $F_2$ are solutions to Mittag-Leffler problem for the data $\cD$, then $F_1-F_2$ restricted to each open annulus $A_i$ is zero, hence is zero all over $W$.
\end{rmks}

The idea behind both proofs is to "approximate" functions $f_i$ (resp. differentials $\omega_i$) by a sequence of functions (resp. differentials) that have only finitely many terms and so that each member of the corresponding sequence of classical Mittag-Leffler problems admits a solution. Then, the sequence of these solutions will converge to the solution of our original problem.

Let $A_i':=A(0;r_1,r_i'']$, $r_i''\in(r_i,r_i')$ be a semi-open annulus in $A_i$ and let us put $A_i'':=A(0;r_i'',r_i')$ and $X:=W\setminus\cup_{i=1}^n$. Then $X$ is a $k$-affinoid curve in $W$ and in $X'$ and $X'\setminus X$ is a disjoint union of open discs. Furthermore, let $x_i$ be the point in the Shilov boundary of $X$ such that $x_i$ is in the closure of $A_i'$ in $W$ (in other words, $x_i$ is the Shilov point of the closed annulus $A[0;r_i'',r_i'']$). The norm $|\cdot|_X$ will serve us as to approximate. 

The proof of the Theorem \ref{thm: M-L differenitals} is simpler, so we start with it.

\begin{proof}[Proof of Theorem \ref{thm: M-L differenitals}]
 One direction is clear by Theorem \ref{thm: residue}. For the other, let $I:=\min\{j(i)\mid i=1,\dots,n\}$ and for $l<I$ let us put 
 $$
 \omega_{i,l}:=\sum_{s=l}^{j(i)}\alpha_{i,s}\, t_i^s\, dt_i,\quad i=1,\dots,n.
 $$
 Clearly, 
 $$
 \sum_{i=1}^n\res_{e_i}(\omega_i)=\sum_{i=1}^n\res_{e_i}(\omega_{i,l})=0,\quad l=I-1,I-2,\dots,
 $$
 so, by the classical Mittag-Leffler problem for differentials there exists a differential form $\sigma_l$ on $\cX'$, holomorphic outside of $\{a_1,\dots,a_n\}$ and whose local expansion at $a_i$ is precisely $\omega_{i,l}$. By construction 
 $$
 |\sigma_l-\omega_i|_{x_i}=\sup_{s<l}\{|\alpha_{i,l}|\,(r_{i}'')^s\},
 $$
 which tends to 0 as $l$ goes to infinity. Consequently, $(\sigma_l)_{l <I}$ is a Cauchy sequence for the $|\cdot|_X$ norm, and there is a limit $\sigma:=\lim\limits_{l\to-\infty}\sigma_l\in\Omega_X$. The form $\sigma$ restricted to the open annulus $A_i''$ coincides with $\omega_i$ by construction of the sequence $\sigma_l$, hence is a solution to the Mittag-Leffler problem for the data $\{(A_i'',f_i)\mid i=1,\dots,n\}$. By Remark \ref{rmks: M-L} {\em 2.} $\sigma$ is the solution for $\cD^1$.
\end{proof}

\subsubsection{}\label{ss: 1}
The following sections contain preliminary results that will be used for the proof of Theorem \ref{thm: M-L funcitons}. The idea is to truncate functions $f_i$ and modify them so that the corresponding classic Mittag-Leffler problem has a solution. The limit of these solutions will be the solution of our problem.\\

Let $\sigma_1,\dots,\sigma_m$ be the $k$-basis for $\cL^{(1)}(\D)$ and let $I:=\max\{j(i)\mid i=1,\dots,n\}$ and $I'=\min\{j(i)\mid i=1,\dots,n\}$. For each $j=1,\dots,m$ we write
$$
\sigma^{(i)}_j:=\sigma_{j|A_i}=\sum_{l=-\infty}^{-j(i)-1}\alpha^{(i)}_{j,l}\,t_i^l\,dt_i=\sigma_{j|A_i}=\sum_{l=-\infty}^{-I-1}\alpha^{(i)}_{j,l}\,t_i^l\,dt_i,\quad i=1,\dots,n,
$$
(agreeing that some of the top terms in the last series may be 0) and we note that the condition 
$$
\sum_{i=1}^n\res_{e_i}(f_i\,\sigma_j)=\sum_{i=1}^n\res_{e_i}(f_i\, \sigma^{(i)}_j)=0,\quad j=1,\dots,m,
$$
is equivalent to 
\begin{equation}\label{eq: new conditions}
\sum_{i=1}^n\sum_{l=-\infty}^{I}f_{i,l}\,\alpha^{(i)}_{j,-l-1}=0,\quad j=1,\dots,m.
\end{equation}
Now, for $l<I'$, let us put 
$$
\vphi_{i,l}:=\vphi_{i,l}(t_i)=\sum_{s=l}^{I}f_{i,s}\,t^s_i=\sum_{s=l}^{j(i)}f_{i,s}\,t^s_i,\quad i=1,\dots,n,
$$
and
$$
\beta_{j,l}:=\sum_{i=1}^n\sum_{s=-\infty}^{l-1}f_{i,s}\,\alpha^{(i)}_{j,-s-1},\quad j=1,\dots,m.
$$
In particular, equation \eqref{eq: new conditions} implies that 
\begin{equation}\label{eq: cons 1}
\sum_{i=1}^n\res_{e_i}(\vphi_{i,l}\,\sigma_j)=\sum_{i=1}^n\sum_{s=l}^{I}f_{i,s}\,\alpha^{(i)}_{j,-s-1}=-\beta(j,l),\quad j=1,\dots,m.
\end{equation}
$$
\lim_{l\to -\infty}\beta(j,l)=0, \quad j=1,\dots,m.
$$
If we put $B_l:=\max\{|\beta_{j,l}|\mid j=1,\dots,m\}$, then the last equation is equivalent to
\begin{equation}\label{eq: cons lim}
\lim\limits_{l\to \infty}B_l=0.
\end{equation}
\subsubsection{}\label{ss: 2}

For $l<I'$ and for $j=1,\dots,m$, let us put
$$
A_{j,l}:=(\alpha^{(1)}_{j,-I-1},\alpha^{(2)}_{j,-I-1},\dots,\alpha^{(n)}_{j,-I-1},\dots,\alpha^{(1)}_{j,-I-2},\dots,\alpha^{(n)}_{j,-l}),
$$
and 
$$
A_{j,\infty}:=(\alpha^{(1)}_{j,-I-1},\alpha^{(2)}_{j,-I-1},\dots,\alpha^{(n)}_{j,-I-1},\dots,\alpha^{(1)}_{j,-I-2},\dots,\alpha^{(n)}_{j,-l},...)\in k^{\N}.
$$
\begin{lemma}
 If for all $l\leq I'$ the vectors $A_{1,l},\dots,A_{m,l}$ are linearly dependent over $k$, then so are the vectors $A_{1,\infty},\dots,A_{m,\infty}$.
\end{lemma}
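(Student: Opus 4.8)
The plan is to prove the contrapositive: assuming the infinite vectors $A_{1,\infty},\dots,A_{m,\infty}$ are linearly \emph{independent} over $k$, I will exhibit a single index $l$ for which the truncations $A_{1,l},\dots,A_{m,l}$ are already independent, contradicting the hypothesis. The only structural facts I need are that each $A_{j,l}$ is the truncation of $A_{j,\infty}$ to its first $N(l)$ coordinates (the ordering of the index pairs $(i,\text{level})$ being the same for every $j$), and that $N(l)\to\infty$ as $l\to -\infty$; thus any fixed finite set of coordinates of the $A_{j,\infty}$ is eventually visible in the truncations.

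First I would record the elementary linear-algebra fact that makes the whole argument finitary: if finitely many vectors $A_1,\dots,A_m\in k^{\N}$ are linearly independent, then there exist coordinates $c_1,\dots,c_m$ such that the $m\times m$ matrix formed by those coordinates of the $A_j$ is invertible. To see this, for each coordinate $c$ set $v_c:=((A_1)_c,\dots,(A_m)_c)\in k^m$; if the span of $\{v_c\}_c$ were a proper subspace of $k^m$, there would be a nonzero $\xi=(\xi_1,\dots,\xi_m)\in k^m$ with $\sum_{j}\xi_j(A_j)_c=0$ for every $c$, i.e. $\sum_j\xi_j A_j=0$, contradicting independence. Hence $\{v_c\}_c$ spans $k^m$, and a basis $v_{c_1},\dots,v_{c_m}$ extracted from it forms exactly the columns of the sought invertible $m\times m$ minor.

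Next I would conclude. The columns $c_1,\dots,c_m$ all lie among the first $N:=\max\{c_1,\dots,c_m\}$ coordinates. Choosing $l$ negative enough that $N(l)\ge N$ — possible since $N(l)\to\infty$ as $l\to-\infty$, and automatically $l\le I'$ for such $l$ — the truncated vectors $A_{1,l},\dots,A_{m,l}$ still contain this invertible $m\times m$ submatrix, hence are linearly independent. This contradicts the assumption that $A_{1,l},\dots,A_{m,l}$ are dependent for all $l\le I'$, proving the lemma.

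I expect the only genuine pitfall to be resisting the temptation to argue by ``normalize a dependence relation and pass to a limit'': for each $l$ one has coefficients $(c_{1,l},\dots,c_{m,l})$ giving $\sum_j c_{j,l}A_{j,l}=0$, and one would like to extract a convergent subsequence after rescaling so that $\max_j|c_{j,l}|=1$. Since $k$ need not be locally compact (e.g.\ $\C_p$), the unit ball of $k^m$ is not compact and this has no reason to succeed. The argument above avoids completeness and compactness of $k$ altogether, using only that independence of \emph{finitely many} vectors is certified by a single nonvanishing minor, a condition supported on finitely many coordinates.
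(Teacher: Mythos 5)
Your proof is correct, but it takes a genuinely different route from the paper's. You argue the contrapositive: independence of $A_{1,\infty},\dots,A_{m,\infty}$ forces the column vectors $v_c=\big((A_{1,\infty})_c,\dots,(A_{m,\infty})_c\big)$ to span $k^m$ (otherwise a nonzero linear functional annihilating all of them would give a dependence relation among the rows), so some $m$ coordinates support an invertible $m\times m$ minor, and any truncation deep enough to contain those coordinates is already independent --- contradicting the hypothesis. The paper instead works directly with the dependence relations: for each $l$ it picks a nonzero vector $B_l$ with $\sum_j b_{j,l}A_{j,l}=0$, notes that $B_{l-s}$ is also a dependence vector for the shallower truncation at level $l$, and splits into cases; if infinitely many of the $B_{l-s}$ are proportional to a fixed $B_l$, that single vector already annihilates the infinite vectors, and if not, it eliminates the last coefficient to pass to $m-1$ vectors and inducts on $m$. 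Your argument is shorter, avoids both the induction and the case analysis, and is essentially the observation that the rank of an $m\times\infty$ matrix is attained on finitely many columns; what it gives up is the paper's (partially) explicit description of a dependence vector. Your closing remark about why one cannot simply normalize the $B_l$ and extract a convergent subsequence over a non-locally-compact field such as $\C_p$ is well taken: the paper's proportionality case analysis is precisely its device for circumventing that same obstacle without invoking compactness.
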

\begin{proof}
 For $l<I'$, let $B_{l}:=(b_{1,l},\dots,b_{m,l})\in k^m\setminus\{\vec{0}\}$ such that $\sum_{j=1}^m b_{j,l}\, A_{j,l}=\vec{0}$. 
 We note that for each natural number $s$ we also have
 \begin{equation}\label{eq: l+s}
  \sum_{j=1}^m b_{j,l-s}\, A_{j,l}=\vec{0}.
 \end{equation}
We distinguish the following two possibilities:\\
{\emph {1. There exists a strictly increasing sequence of natural numbers $(s_i)_{i\in\N}$ such that $B_{l+s_i}$ is proportional to $B_{l}$}}.\\
In this case, having in mind \eqref{eq: l+s}, we see that 
$$
\sum_{j=1}^mb_{j,l}\,A_{j,\infty}=\vec{0},
$$
and the lemma follows.\\
{\emph {2. No such a sequence exists.}}\\
In this case we argue by induction on me and first notice that if $m=1$ the lemma is trivial. Suppose that $m>1$. We claim that we can find a number $I_1\leq I'$, and a sequence of vectors $B^{(1)}_l:=(b^{(1)}_{1,l},\dots,b^{(1)}_{m-1,l})\in k^{m-1}\setminus{\vec{0}}$, $l\leq I_1$, and such that 
$$
\sum_{j=1}^{m-1}b^{(1)}_{j,l}\, A_{j,l}=\vec{0},\quad,l=I_1,I_1-1,\dots.
$$
Indeed, suppose that $l\leq I'$ is such that $b_{m,l}\neq 0$. Then, either for all natural numbers $s>0$, $b_{m,l-s}=0$, in which case we can take $I_1=l-1$, and $b^{(1)}_{j,l}:=b_{j,l}$ for $j=1,\dots,m-1$, either there exists some $s_0>0$ such that $b_{j,l-s_0}\neq 0$ and $B_{l-s_0}$ is not proportional to $B_l$. In this case we may take $I_1=l$ and $b^{(1)}_{j,l}:=b_{j,l}-\frac{b_{m,l}\, b_{j,l-s_0}}{b_{m,l-s_0}}$, $j=1,\dots,m-1$, because of \eqref{eq: l+s}, and because $B_{l-s_0}$ is not proportional to $B_l$ we conclude that $B^{(1)}_l\neq \vec{0}$. 

Now we have that for all $l\leq I_1$, the vectors $A_{1,l},\dots,A_{m-1,l}$ are linearly dependent over $k$ so by inductive hypothesis, the vectors $A_{1,\infty},\dots,A_{m-1,\infty}$ are linearly dependent as well and the lemma is proved.
\end{proof}
\begin{lemma}
 The vectors $A_{1,\infty},\dots,A_{m,\infty}$ are linearly independent over $k$.
\end{lemma}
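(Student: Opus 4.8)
The plan is to reduce the statement to the principle of analytic continuation for differentials on the connected projective curve $\cX'$, once we recognize that the vector $A_{j,\infty}$ is merely a repackaging of the \emph{complete} Laurent expansions of $\sigma_j$ along the annuli $A_1,\dots,A_n$.

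First I would record that the coordinates of $A_{j,\infty}$ exhaust all the Laurent coefficients of the restrictions $\sigma^{(i)}_j=\sigma_{j|A_i}$, $i=1,\dots,n$. Indeed, since $\sigma_j\in\cL^{(1)}(\D)$, at each $a_i$ it has a pole of order at most $j(i)+1\le I+1$ (the multiplicity of $a_i$ in $\D$), so $\sigma^{(i)}_j$ carries no Laurent term $\alpha^{(i)}_{j,l}\,t_i^l\,dt_i$ of order below $-I-1$; consequently every coefficient of $\sigma^{(i)}_j$ already occurs among the entries of $A_{j,\infty}$, for each $i$, none being omitted. Because these coefficients depend $k$-linearly on $\sigma_j$ (they are coefficient-extraction functionals), for any $c_1,\dots,c_m\in k$ the vector $\sum_{j=1}^m c_j\,A_{j,\infty}$ is exactly the vector of all Laurent coefficients of $\sigma:=\sum_{j=1}^m c_j\,\sigma_j\in\cL^{(1)}(\D)$ along the $A_i$; hence it vanishes if and only if $\sigma_{|A_i}=0$ for every $i$.

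Next I would suppose $\sum_{j=1}^m c_j\,A_{j,\infty}=\vec 0$ and deduce, by the previous paragraph, that $\sigma_{|A_i}=0$ for all $i$; in particular $\sigma$ vanishes on the nonempty open annulus $A_1\subset\cX'$. As $\sigma$ is a meromorphic differential on the connected smooth projective curve $\cX'$, a nonzero such differential cannot vanish on a nonempty open subset, so $\sigma\equiv 0$ on $\cX'$. Finally, since $\sigma_1,\dots,\sigma_m$ is a $k$-basis of $\cL^{(1)}(\D)$, this gives $c_1=\dots=c_m=0$, which is precisely the asserted linear independence of $A_{1,\infty},\dots,A_{m,\infty}$.

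The only step that requires genuine care is the completeness claim of the second paragraph: one must invoke the pole-order bound built into $\cL^{(1)}(\D)$ to be certain that $A_{j,\infty}$ captures \emph{every} coefficient of $\sigma^{(i)}_j$, so that the vanishing of $A_{j,\infty}$ is truly equivalent to the vanishing of the restrictions $\sigma_{j|A_i}$. Once this bookkeeping is secured, the lemma is nothing but the injectivity of the restriction map $\cL^{(1)}(\D)\to\Omega_{A_1}$, and no residue computation or approximation argument is needed here.
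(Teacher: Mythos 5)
Your proof is correct and follows essentially the same route as the paper's: a vanishing linear combination of the $A_{j,\infty}$ forces the corresponding combination $\sigma=\sum_j c_j\,\sigma_j$ to vanish on each annulus $A_i$, hence identically on the connected curve $\cX'$, and then $c_1=\dots=c_m=0$ since the $\sigma_j$ form a basis of $\cL^{(1)}(\D)$. You merely make explicit two points the paper leaves implicit, namely that the pole-order bound built into $\cL^{(1)}(\D)$ guarantees $A_{j,\infty}$ records \emph{all} Laurent coefficients of $\sigma_{j|A_i}$, and the identity-theorem step.
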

\begin{proof}
 Suppose that for some $b_1,\dots,b_m\in k$, we have $b_1\,A_{1,\infty}+\dots+b_m\,A_{m,\infty}=\vec{0}$. By our construction of the vectors $A_{j,\infty}$ it follows that for each $i=1,\dots,n$,
 $$
 b_1\,\sigma^{(i)}_1+\dots+b_m\, \sigma^{(i)}_m=0.
 $$
 But then, $\sigma:=b_1\, \sigma_1+\dots+b_m\,\sigma_m$ restricted to each annulus $A_i$ is the zero differential form, hence $\sigma$ itself is zero. This implies that $b_1=\dots=b_m=0$.
\end{proof}
An immediate consequence of the previous two lemmas is the following.
\begin{cor}\label{cor: consequence}
 For some $I_2\leq I'$, the vectors $A_{1,l},\dots,A_{m,l}$ are linearly independent over $k$ for all $l\leq I_2$.
\end{cor}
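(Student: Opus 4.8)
The plan is to obtain the Corollary as a purely formal consequence of the two preceding lemmas, together with one elementary monotonicity observation about linear independence under truncation. I would first read the first lemma in its contrapositive form: \emph{if the infinite vectors $A_{1,\infty},\dots,A_{m,\infty}$ are linearly independent, then there exists at least one level $l_0\leq I'$ at which the finite vectors $A_{1,l_0},\dots,A_{m,l_0}$ are linearly independent.} The second lemma supplies precisely the hypothesis of this contrapositive, so between the two lemmas I immediately secure a single good level $l_0\leq I'$.

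The remaining point is to upgrade independence at the single level $l_0$ to independence at \emph{every} level $l\leq l_0$, which is what the statement demands once we set $I_2:=l_0$. Here I would use that the vectors are nested by truncation: for $l\leq l_0$ the vector $A_{j,l_0}$ is the initial segment of $A_{j,l}$ consisting of exactly those coordinates already present at level $l_0$ (equivalently, $A_{j,l_0}$ is a fixed coordinate projection of $A_{j,l}$, and of $A_{j,\infty}$), since decreasing $l$ only appends further coordinates. Because this projection is $k$-linear, any relation $\sum_{j=1}^m c_j\,A_{j,l}=\vec{0}$ projects to $\sum_{j=1}^m c_j\,A_{j,l_0}=\vec{0}$; independence of $A_{1,l_0},\dots,A_{m,l_0}$ then forces all $c_j=0$. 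Thus independence at $l_0$ automatically propagates to all $l\leq l_0$, and $I_2:=l_0$ works.

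I do not expect a genuine obstacle, since the heavy lifting is already done in the two lemmas and the corollary is a two-line deduction. The only thing I would be careful about is the \emph{direction} of the monotonicity: linear independence is inherited by the longer vectors (smaller $l$), not by the shorter ones, and it is fortunate---indeed by design of the indexing---that ``longer'' corresponds to ``$l$ smaller'', which is exactly the half-line $l\leq I_2$ appearing in the statement. I would therefore make explicit at the outset that decreasing $l$ merely appends coordinates to each $A_{j,l}$, so that the maps between levels are honest coordinate projections compatible with $k$-linear combinations; with that remark in place the deduction is immediate.
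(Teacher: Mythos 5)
Your proof is correct and is exactly the deduction the paper has in mind: the paper offers no written argument, merely declaring the corollary ``an immediate consequence of the previous two lemmas,'' and your two steps (contrapositive of the first lemma plus the second lemma to get one good level $l_0$, then the coordinate-projection/truncation observation to propagate independence to all $l\leq l_0$) fill in that gap in the intended way. Your explicit care about the direction of the monotonicity (independence passes from the shorter vector $A_{j,l_0}$ to the longer ones $A_{j,l}$, $l\leq l_0$) is precisely the point that makes the statement hold on the whole half-line $l\leq I_2$.
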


\subsubsection{}\label{ss: 3}

Let us denote by
$$
\cM_{\infty}:=\left(\begin{array}{c}
               A_{1,\infty}\\
               \vdots\\
               A_{m,\infty}
              \end{array}\right)
$$
the $m\times \infty$ matrix whose rows are the vectors $A_{j,\infty}$, constructed above. Further, let us fix natural numbers $s_1<\dots<s_m$ such that the $m\times m$ matrix $\cM$ whose $j^{th}$ column is equal to the $s_j^{th}$ of $\cM_{\infty}$, $j=1,\dots,m$, and such that $\cM$ is invertible (we can find such numbers because of Corollary \ref{cor: consequence}). 

In other words, we fix $i_1,\dots,i_m\in\{1,\dots,n\}$ and integers $l_1<\dots<l_m$ such that 
$$
\cM=\left(\begin{array}{ccc}
           \alpha^{i_1}_{1,-l_1-1}&\dots&\alpha^{i_m}_{1,-l_m-1}\\
           \vdots&\dots&\vdots\\
           \alpha^{i_1}_{m,-l_1-1}&\dots&\alpha^{i_m}_{m,-l_m-1}
          \end{array}
\right).
$$

Let $M=|\det(\cM)|$ and $M'$ the maximum among the norms of all $(m-1)\times(m-1)$ minors of $\cM$. 

Let $X_{i_1,-l_1-1}=x_{i_1,-l_1-1},\dots,X_{i_m,-l_m-1}=x_{i_m,-l_m-1}$ be the solution of the system 
\begin{equation}\label{eq: sys 2}
\cM\, \left(\begin{array}{c}X_{i_1,-l_1-1}\\ \vdots\\ X_{i_m,-l_m-1}\end{array}\right)=\left(\begin{array}{c}\beta_{1,l_m}\\ \vdots\\ \beta_{m,l_m}\end{array}\right),
\end{equation}
\begin{cor} We have $|x_{i_s,-l_s-1}|\leq B_{l_m}\, \frac{M'}{M}$, $s=1,\dots,m$. 
\end{cor}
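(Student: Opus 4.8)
The plan is to deduce this purely from Cramer's rule and the ultrametric inequality, since \eqref{eq: sys 2} is a square linear system with invertible coefficient matrix $\cM$. First I would write, for each $s=1,\dots,m$,
$$
x_{i_s,-l_s-1}=\frac{\det(\cM^{(s)})}{\det(\cM)},
$$
where $\cM^{(s)}$ is the matrix obtained from $\cM$ by replacing its $s$-th column with the right-hand side $(\beta_{1,l_m},\dots,\beta_{m,l_m})^{T}$. This is legitimate because $\cM$ was chosen invertible in Corollary \ref{cor: consequence}, so $M=|\det(\cM)|\neq 0$.

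Next I would expand $\det(\cM^{(s)})$ by cofactors along that $s$-th column. Since the $s$-th column of $\cM^{(s)}$ is exactly the $\beta$-vector, this gives
$$
\det(\cM^{(s)})=\sum_{j=1}^m(-1)^{j+s}\,\beta_{j,l_m}\,\mu_{j,s},
$$
where $\mu_{j,s}$ is the determinant of the submatrix obtained by deleting the $j$-th row and the $s$-th column. The point to check carefully is that the deleted column is precisely the column that was altered, so each $\mu_{j,s}$ is a genuine $(m-1)\times(m-1)$ minor of the \emph{original} matrix $\cM$, and hence $|\mu_{j,s}|\leq M'$ for all $j$ and $s$ by the definition of $M'$.

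Then the ultrametric (strong triangle) inequality on $k$ yields
$$
|\det(\cM^{(s)})|\leq \max_{1\leq j\leq m}\big(|\beta_{j,l_m}|\,|\mu_{j,s}|\big)\leq B_{l_m}\,M',
$$
using $|\beta_{j,l_m}|\leq B_{l_m}$ for every $j$ and $|\mu_{j,s}|\leq M'$. Dividing by $M=|\det(\cM)|$ gives
$$
|x_{i_s,-l_s-1}|=\frac{|\det(\cM^{(s)})|}{M}\leq B_{l_m}\,\frac{M'}{M},
$$
as claimed. There is no real obstacle in this argument; the only subtlety worth flagging is the bookkeeping in the previous paragraph, namely that the cofactor minors arising from the $s$-th-column expansion are honest minors of $\cM$ (not of the modified matrix $\cM^{(s)}$), which is what allows the uniform bound by $M'$.
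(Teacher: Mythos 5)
Your proof is correct and is exactly the argument the paper intends: Cramer's rule, cofactor expansion of $\det(\cM^{(s)})$ along the replaced column (so that the cofactors are genuine $(m-1)\times(m-1)$ minors of $\cM$ itself), the ultrametric inequality to get the bound $B_{l_m}M'$ on the numerator, and division by $M=|\det(\cM)|$. The paper states this in one line; you have simply supplied the bookkeeping.
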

\begin{proof}
 This is a simple consequence of Cramer's rule and development of the determinant via minors.
\end{proof}

Let $l<l_m$ be an integer, and let us define for $i=1,\dots,n$,
$$
\phi_{i,l}:=\phi_{i,l}(t_i)=\vphi_{i,l}+\sum_{\substack{s=1\\i_s=i}}^mx_{i_s,-l_i-1}\, t_i^{l_i}=\sum_{s=l}^If_{i,s}\, t^s_i+\sum_{\substack{s=1\\i_s=i}}^mx_{i_s,-l_i-1}\, t_i^{l_i}.
$$
The functions$\phi_{i,l}$ are obviously Laurent polynomials.
\begin{lemma}\label{lem: phi_l}
 The classic Mittag-Leffler problem for the data $\{(A_i,\phi_{i,l})\mid i=1,\dots,n\}$ (see Remarks \ref{rmk: classic M-L} and \ref{rmks: M-L} {\em {1.}}) admits a solution.
\end{lemma}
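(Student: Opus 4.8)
The plan is to reduce the statement to a classical Mittag-Leffler problem and then verify the classical solvability criterion directly, exploiting the way the correction terms were manufactured. Since every $\phi_{i,l}$ is a Laurent polynomial, it has only finitely many terms and hence converges on the whole punctured disc $D_i\setminus\{a_i\}$; by Remark \ref{rmk: classic M-L} the Mittag-Leffler problem for $\{(A_i,\phi_{i,l})\}$ is then precisely the classical Mittag-Leffler problem for meromorphic functions on $\cX'$. Because the correction exponents $l_s$ all satisfy $l_s\le I'=\min_i j(i)$ while $\vphi_{i,l}$ has top degree $j(i)$, the top degree of $\phi_{i,l}$ at each $a_i$ does not exceed $j(i)$; the divisor attached to the problem is therefore again $\D$, and the relevant space of test differentials is again $\cL^{(1)}(\D)$, with the \emph{same} basis $\sigma_1,\dots,\sigma_m$. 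Invoking the classical criterion, which in this finite-term setting is exactly the equivalence recorded in Remark \ref{rmks: M-L} \emph{1.}, it then suffices to check that $\sum_{i=1}^n\res_{e_i}(\phi_{i,l}\,\sigma_j)=0$ for every $j=1,\dots,m$.

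First I would use additivity of the residue to split each of these sums along $\phi_{i,l}=\vphi_{i,l}+\sum_{s:\,i_s=i}x_{i_s,-l_s-1}\,t_i^{l_s}$. The truncation part is handled by \eqref{eq: cons 1}, which is exactly where the standing hypothesis \eqref{eq: new conditions} (condition \emph{3.} for the original data) enters: it gives $\sum_{i}\res_{e_i}(\vphi_{i,l}\,\sigma_j)=-\beta_{j,l}$. For the correction part, the only contribution to the coefficient of $t_i^{-1}$ in $x_{i_s,-l_s-1}\,t_i^{l_s}\cdot\sigma_j$ comes from the $t_i^{-l_s-1}\,dt_i$-term of $\sigma_j$, so this part equals $\sum_{s=1}^m x_{i_s,-l_s-1}\,\alpha^{(i_s)}_{j,-l_s-1}=(\cM\,\vec x)_j$. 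Since $\vec x$ is obtained by solving the linear system \eqref{eq: sys 2}, which is set up precisely so that $(\cM\,\vec x)_j$ reproduces the tail $\beta_{j,l}$, the correction part cancels the truncation part and one obtains $\sum_i\res_{e_i}(\phi_{i,l}\,\sigma_j)=-\beta_{j,l}+\beta_{j,l}=0$, as required.

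The crux is to guarantee that the correction can realize exactly the vector of tails, and this is the whole reason for the preparatory work in Sections \ref{ss: 2} and \ref{ss: 3}: the truncated coefficient vectors $A_{1,l},\dots,A_{m,l}$ are linearly independent once $l$ is small enough (Corollary \ref{cor: consequence}), which is what lets us extract the invertible $m\times m$ minor $\cM$ and solve \eqref{eq: sys 2} uniquely. Beyond this, the step I expect to demand the most care is the bookkeeping on degrees: one must check that the chosen positions $(i_s,-l_s-1)$ lie in the range of exponents genuinely present in $\vphi_{i,l}$, which holds because $l<l_1<\dots<l_m\le I'$, so that adding the corrections neither pushes the top degree above $j(i)$ (which would enlarge $\D$ and change the space $\cL^{(1)}(\D)$ governing the criterion) nor introduces polar terms below level $l$. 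Once this is in place the cancellation is immediate, and the classical criterion produces the sought solution.
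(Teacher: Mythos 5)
Your argument is essentially identical to the paper's proof: both reduce the lemma to checking $\sum_{i}\res_{e_i}(\phi_{i,l}\,\sigma_j)=0$ against the basis $\sigma_1,\dots,\sigma_m$ of $\cL^{(1)}(\D)$, split the residue by additivity along $\phi_{i,l}=\vphi_{i,l}+\text{(correction)}$, obtain $-\beta_{j,l}$ from \eqref{eq: cons 1} for the truncated part, and recognize the correction's contribution as $(\cM\,\vec{x})_j$, which the linear system \eqref{eq: sys 2} was built to make cancel the tail. Your additional bookkeeping on the degrees of the correction terms (so that the governing divisor, and hence the test space $\cL^{(1)}(\D)$, is unchanged) is a sensible check that the paper leaves implicit, but it does not change the route.
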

\begin{proof}
 This amounts to the following calculations, for $j=1,\dots,m$,
 \begin{align*}
 \sum_{i=1}^n\res_{e_i}(\phi_{i,l}\,\sigma_j)&=\sum_{i=1}^n\res_{e_i}(\vphi_{i,l}\, \sigma_j)+\sum_{i=1}^n\res_{e_i}\Big(\sum_{\substack{s=1\\i_s=i}}^mx_{i_s,-l_i-1}\, t_i^{l_i}\, \sigma_j\Big)\\
 &=-\beta_{j,l}+\sum_{i=1}^n\sum_{\substack{s=1\\i_s=i}}^mx_{i_s,-l_i-1}\, \alpha^{(i_s)}_{j,-l_i-1}\quad\text{(by \eqref{eq: cons 1})}\\
 &=-\beta_{j,l}+\sum_{s=1}^mx_{i_s,-l_i-1}\, \alpha^{(i_s)}_{j,-l_i-1}\\
 &=-\beta_{j,l}+\beta_{j,l}=0.
 \end{align*}
\end{proof}
\begin{lemma}
 Let $\phi_l$ be the solution of the classic Mittag-Leffler problem for the data $\{(A_i,\phi_{i,l})\mid i=1,\dots,n\}$, where $l<l_m$. Then, the sequence $(\phi_l)_{l<l_m}$ is Cauchy for the norm $|\cdot|_X$. 
\end{lemma}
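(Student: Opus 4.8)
The plan is to turn the global spectral estimate into finitely many coefficientwise estimates on the annuli $A_i$, exploiting that the sup-norm $|\cdot|_X$ on the $k$-affinoid curve $X$ is attained on its Shilov boundary. Recall that $X'\setminus X$ is a disjoint union of open discs whose boundary points are exactly $x_1,\dots,x_n$, so that $Sh(X)=\{x_1,\dots,x_n\}$ and, for every $g\in\cA_X$, one has $|g|_X=\max_{1\le i\le n}|g|_{x_i}$ (this is the content of \eqref{eq: bd sup}). Since each $x_i$ is the point of $A_i$ of radius $r_i''\in(r_i,r_i')$, and each classical solution $\phi_l$ is holomorphic on the whole annulus $A_i$ with Laurent expansion there equal to $\phi_{i,l}$, evaluation at $x_i$ only sees this restriction; concretely $|t_i^s|_{x_i}=(r_i'')^s$ and $|g|_{x_i}$ is the maximum over $s$ of the $s$-th coefficient of $g$ on $A_i$ times $(r_i'')^s$.

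Fix $l<l'<l_m$. By linearity of the classical Mittag--Leffler problem and uniqueness of its solution (Remark \ref{rmks: M-L} {\em 3.}), the function $\phi_l-\phi_{l'}$ is the solution of the classical Mittag--Leffler problem attached to the difference data $\{(A_i,\phi_{i,l}-\phi_{i,l'})\mid i=1,\dots,n\}$. Combining this with the previous paragraph gives
$$
|\phi_l-\phi_{l'}|_X=\max_{1\le i\le n}|\phi_{i,l}-\phi_{i,l'}|_{x_i}.
$$
I would then expand the right-hand side. The difference $\phi_{i,l}-\phi_{i,l'}$ is a Laurent polynomial splitting into a \emph{tail} part $\vphi_{i,l}-\vphi_{i,l'}=\sum_{s=l}^{l'-1}f_{i,s}\,t_i^s$, coming from truncating $f_i$ further, and a \emph{correction} part $\sum_{s:\,i_s=i}(x_{i_s}-x'_{i_s})\,t_i^{l_s}$, coming from the finitely many auxiliary unknowns solving \eqref{eq: sys 2} (here $x_{i_s}$, resp. $x'_{i_s}$, are the correction coefficients attached to the index $l$, resp. $l'$). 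By the ultrametric inequality, $|\phi_{i,l}-\phi_{i,l'}|_{x_i}$ is bounded above by the maximum of $\sup_{s\le l'-1}|f_{i,s}|(r_i'')^s$ and $\frac{M'}{M}\,\max(B_l,B_{l'})\cdot\max_s(r_i'')^{l_s}$.

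It remains to check that both bounds tend to $0$ as $l,l'\to-\infty$. The first does because $f_i$ converges on $A_i$ and $x_i$ lies in $A_i$, so $|f_{i,s}|(r_i'')^s\to 0$ as $s\to-\infty$, whence the supremum over $s\le l'-1$ vanishes in the limit. The second does because the exponents $l_1<\dots<l_m$ are fixed, so $\max_s(r_i'')^{l_s}$ is a constant, while the Cramer-rule bound established just before Lemma \ref{lem: phi_l} controls each correction coefficient for the index $l$ by $\frac{M'}{M}B_l$, and $B_l\to 0$ by \eqref{eq: cons lim}. Hence $|\phi_l-\phi_{l'}|_X\to 0$, i.e. the sequence is Cauchy. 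The one step needing care is the first reduction: identifying $Sh(X)$ with $\{x_1,\dots,x_n\}$ and justifying that the sup-norm of the \emph{algebraic} solution $\phi_l-\phi_{l'}$ is computed by the \emph{explicit annular} expansions at these points; once this is in place the remainder is bookkeeping with estimates already in hand.
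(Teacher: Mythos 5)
Your proof is correct and follows the same route as the paper: reduce $|\cdot|_X$ to the Shilov points $x_1,\dots,x_n$ via \eqref{eq: bd sup}, identify $(\phi_l-\phi_{l'})_{|A_i}$ with the Laurent polynomial $\phi_{i,l}-\phi_{i,l'}$, and estimate coefficientwise at radius $r_i''$. The one place where you go beyond the paper is worth keeping: the paper's displayed computation equates $|\phi_{i,l}-\phi_{i,l-s}|_{x_i}$ with $\max_{s'}|f_{i,s'}|\,(r_i'')^{s'}$ over the truncation range, silently dropping the difference of the correction terms $\sum_{s:\,i_s=i}x_{i_s,-l_s-1}\,t_i^{l_s}$; since those coefficients must be recomputed for each $l$ (the right-hand side of \eqref{eq: sys 2} has to read $\beta_{j,l}$ for the cancellation in Lemma \ref{lem: phi_l} to go through), they do not cancel in the difference, and your ultrametric bound $\frac{M'}{M}\max(B_l,B_{l'})\cdot\max_s(r_i'')^{l_s}\to 0$, via the Cramer estimate and \eqref{eq: cons lim}, is exactly the estimate needed to close that gap.
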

\begin{proof}
 For an integer $l<l_m$ and $s$ a natural number we have
 \begin{align*}
 |\phi_{l}-\phi_{l-s}|_X&=\max\{|\phi_l-\phi_{l-s}|_{x_i}\mid i=1,\dots,n\}\\
 &=\max\{|\phi_{i,l}-\phi_{i,l-s}|_{x_i}\mid i=1,\dots,n\}\\
 &=\max\{\max\limits_{s=l-s,\dots,l-1}|f_{i,s}|\,(r_i'')^s\mid i=1,\dots,n\},
 \end{align*}
 and the last expression tends to 0 as $l$ tends to $-\infty$.
\end{proof}

We may finally start with the proof of Theorem \ref{thm: M-L funcitons}.
\begin{proof}[Proof of Theorem \ref{thm: M-L funcitons}] If $f$ is the solution of the Mittag-Leffler problem for $\cD$, then {\emph{ 2.}} is true by Theorem \ref{thm: residue}. The implication \emph{ 2.} to \emph{ 3.} is clear, so we only need to prove that \emph{3.} implies \emph{1.}

Let $\phi:=\lim\limits_{l\to-\infty}\phi_l$. Then $\phi\in \cA_X$ and its restriction on the annulus $A_i''$ coincides with $f_i$, by the construction of the functions $\phi_{i,l}$. Hence, by Remark \ref{rmks: M-L} {\emph{ 2.}} $\phi$ extends to an analytic function $W$ and is a solution to the Mittag-Leffler problem for $\cD$. 
\end{proof}

\begin{cor}
 Let $X$ be a semi-open finite curve, let $e_1,\dots,e_{n'}$ be its ends, and for $i=1,\dots,n'$, let $A_i\in e_i$ be disjoint open annuli and $t_i:A_i\to A(0;r_i,r_i')$ an orientation preserving coordinate on the oriented annulus $(A_i,e_i)$. 
 
 Suppose that for every $i=1,\dots,n'$, we are given an analytic function (resp. a differential form) $f_i:=f_i(t_i)=\sum_{j=-\infty}^{j(i)}f_{i,j}\,t_i^j$ (resp. $\omega_i=\sum_{j=-\infty}^{j(i)}\alpha_{i,j}\, t_i^j\, dt_i$) on $A_i$. 
 
 Then, there exists a function $f$ (resp. a differential form $\omega$) on $X$ such that $f_{|A_i}=f_i$ (resp. $\omega_{|A_i}=\omega_i$).
\end{cor}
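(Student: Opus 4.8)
The plan is to embed $X$ into a wide open curve and invoke Theorems \ref{thm: M-L differenitals} and \ref{thm: M-L funcitons}, using one of the removed open discs as a reservoir into which the residue obstruction can be dumped. Fix a simple projectivization $X'$ and write $X=X'\setminus(\bigcup_i D_i\cup\bigcup_j \bar E_j)$, where the $D_i$ are the removed open discs (there is at least one, as $X$ is semi-open) and the $\bar E_j$ the removed closed discs. Set $W:=X'\setminus\bigcup_j \bar E_j$; it is wide open, contains $X$, and since the boundary point of each removed open disc lies in $X$, the ends of $X$ are precisely those coming from the $\bar E_j$, so $\cE(W)=\cE(X)=\{e_1,\dots,e_{n'}\}$ and each $A_i$ is an annulus of $W$ in the end $e_i$. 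Inside $D_1$ choose a closed disc $\tilde D_1$ and a rational point $b_1\in\tilde D_1$, and put $W'':=W\setminus\tilde D_1$. Then $W''$ is again wide open, $X\subset W''$, and $W''$ carries one extra end $f_1$ represented by the annulus $C_1:=D_1\setminus\tilde D_1$, which is disjoint from all the $A_i$. The key point is that any solution produced on $W''$ restricts to a solution on $X$, since its behaviour on $C_1\subset D_1$ is irrelevant to $X$.

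For the differential statement I would apply Theorem \ref{thm: M-L differenitals} on $W''$ to the data $\omega_i$ on $A_i$ supplemented by an auxiliary form on $C_1$: with $s_1$ an orientation preserving coordinate on $(C_1,f_1)$, take $\eta_1:=-\big(\sum_{i=1}^{n'}\res_{e_i}(\omega_i)\big)\,ds_1/s_1$. Then $\sum_{i=1}^{n'}\res_{e_i}(\omega_i)+\res_{f_1}(\eta_1)=0$, so the problem on $W''$ is solvable and the restriction of its solution to $X$ is the desired $\omega$.

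The function statement is the heart of the matter and amounts to choosing $h_1$ on $C_1$ so that the criterion of Theorem \ref{thm: M-L funcitons} holds on $W''$. Taking $h_1$ of top degree $0$, the associated divisor is $\D''=\sum_{i=1}^{n'}(j(i)+1)[a_i]+[b_1]$ and $\cL^{(1)}(\D'')$ is finite dimensional; by condition (3) it suffices to find $h_1=\sum_{k\le 0}h_{1,k}\,s_1^k$ with $\res_{f_1}(h_1\sigma)=-\sum_{i=1}^{n'}\res_{e_i}(f_i\sigma)$ for every $\sigma\in\cL^{(1)}(\D'')$. Writing $\sigma_{|C_1}=\sum_l\beta_l(\sigma)\,s_1^l\,ds_1$ one computes $\res_{f_1}(h_1\sigma)=\sum_{k\le 0}h_{1,k}\,\beta_{-1-k}(\sigma)$, so varying $h_1$ realizes exactly the span of the coefficient functionals $\sigma\mapsto\beta_l(\sigma)$ for $l\ge -1$. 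These functionals have trivial common kernel on $\cL^{(1)}(\D'')$, since a differential all of whose Laurent coefficients at $b_1$ vanish is identically zero on the connected curve $\cX'$; hence they span the dual of the finite dimensional space $\cL^{(1)}(\D'')$, and the target functional $\sigma\mapsto-\sum_i\res_{e_i}(f_i\sigma)$ is realized by a suitable Laurent polynomial $h_1$. (Should $h_1$ turn out to have top degree $<0$, the genuine test space only shrinks, so the identity persists.) Thus condition (3) holds on $W''$, Theorem \ref{thm: M-L funcitons} produces a solution there, and its restriction to $X$ solves the problem for the $f_i$.

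The main obstacle is precisely this surjectivity step: one must know that a single removed open disc supplies enough freedom to cancel all the finitely many residue obstructions measured by $\cL^{(1)}(\D'')$. This rests on the identity principle for differentials on the connected curve $\cX'$, and it is exactly what renders the semi-open problem unconditionally solvable, in contrast to the wide open case where the residue conditions of Theorems \ref{thm: M-L funcitons} and \ref{thm: M-L differenitals} are genuine constraints.
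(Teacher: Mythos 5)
Your proof is correct, and its overall strategy is the one the paper uses: enlarge the semi-open curve to a wide open curve carrying at least one extra end, prescribe auxiliary data on an annulus in that extra end so that the residue criteria of Theorems \ref{thm: M-L differenitals} and \ref{thm: M-L funcitons} are satisfied, and restrict the resulting solution back to $X$. For differentials the two arguments coincide (a single $ds_1/s_1$-term absorbs the residue obstruction). For functions they differ in how the auxiliary function is produced. The paper takes one extra annulus for each end of $W$ not coming from $X$ and solves the resulting linear system \eqref{eq: new system} by appealing to Corollary \ref{cor: consequence}; you work with a single annulus $C_1$ around one rational point $b_1$ and note directly that the coefficient functionals $\sigma\mapsto\beta_l(\sigma)$, $l\ge -1$, have trivial common kernel on the finite-dimensional space $\cL^{(1)}(\D'')$ (an element of that space has $\ord_{b_1}\ge -1$, so vanishing of these coefficients forces it to vanish near $b_1$ and hence, by connectedness of $\cX'$, everywhere), so they span the dual and the target functional $\sigma\mapsto -\sum_i\res_{e_i}(f_i\,\sigma)$ is realized by a Laurent polynomial $h_1$. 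This variant buys two things: it shows a single auxiliary end suffices, and it makes explicit a point the paper leaves implicit, namely that the system must be solved using only the unknowns attached to the auxiliary annuli, which requires the dual-spanning property of the coefficient vectors restricted to those annuli rather than the independence of the full vectors $A_{1,l},\dots,A_{m,l}$ of Corollary \ref{cor: consequence} --- both facts rest on the same identity-principle argument, but yours is the statement actually needed. Your parenthetical handling of the case where $h_1$ has top degree $<0$ correctly disposes of the only circularity worry, since the test space $\cL^{(1)}(\D'')$ can then only shrink.
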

\begin{proof}
Let $X'$ be a simple projectivization of $X$ and let $W$ be a wide open curve in $X'$ that contains $X$ and such that $\cE(X)\subset \cE(W)$. Let us further denote by $e_{n'+1},\dots,e_{n}$ the ends of $\cE(W)$ that are not in $\cE(X)$, and for each $i=n'+1,\dots,m$ let $A_i$ be an open annulus in $e_i$ (disjoint from all the annuli involved) and $t_i:A\to A(0;r_i,r_i')$ an orientation preserving coordinate on $(A_i,e)$.

 We first deal with the statement for the analytic functions, where the idea is to find an analytic function $f_i:=f_i(t_i)=\sum_{-\infty}^{j(i)}f_{i,j}\,t_i^j$ on an open annulus $A_i$ such that the Mittag-Leffler problem for the data $\cD=\{(A_i,f_i)\mid i=1,\dots,m\}$ has a solution $f$ on $W$. Naturally, function $f$ will satisfy the conditions from the corollary.
 
 Using the notation from Sections \ref{ss: 1}, \ref{ss: 2} and \ref{ss: 3} we may write the equation \eqref{eq: new conditions} as
 \begin{align}
  0&=\sum_{i=1}^n\sum_{l=-\infty}^If_{i,l}\,\alpha^{(i)}_{j,-l-1}=\sum_{i=1}^{n'}\sum_{l=-\infty}^If_{i,l}\,\alpha^{(i)}_{j,-l-1}+\sum_{i=n'+1}^n\sum_{l=-\infty}^If_{i,l}\,\alpha^{(i)}_{j,-l-1}\nonumber\\
  &=S_j+\sum_{i=n'+1}^n\sum_{l=-\infty}^If_{i,l}\,\alpha^{(i)}_{j,-l-1},\quad j=1,\dots,m.\label{eq: new system}
 \end{align}
By Corollary \ref{cor: consequence}, for sufficiently small integer $l$ vectors $A_{1,l},\dots,A_{m,l}$ are linearily independent over $k$, and consequently, and similarly to what happened in Section \ref{ss: 3}, the system \eqref{eq: new system} admits a solution where for each $i=n'+1,\dots,n$, there are only finitely many nonzero $f_{i,j}$, so that in particular $f_i$ is an analytic function on $A_i$. Then, Mittag-Leffler problem for data $\cD$ admits a solution $f$ on $W$ by Theorem \ref{thm: M-L funcitons}, and this proves the statement.

The question concerning differential forms is slightly easier. Namely, for each $i=n'+1,\dots,n$, we choose a differential form $\omega_i$ on $A_i$ such that the sum of the residues $\sum_{i=1}^n\res_{e_i}(\omega_i)=0$ (basically, we can choose arbitrary $\omega_i$ for $i=n'+1,\dots, n-1$ and $\omega_n:=a/t_n\,dt_n $ where $a=-\sum_{i=1}^{n-1}\res_{e_i}(\omega_i)$). By Theorem \ref{thm: M-L differenitals} there exists a differential form $\omega$ on $W$ whose restriction on $A_i$ is $\omega_i$. 
\end{proof}

\begin{rmk}
 It is most likely that Theorems \ref{thm: M-L differenitals} and \ref{thm: M-L funcitons} can be extracted from the global duality results as in \cite[Sections 7, 8, 9]{CrewFinite} or \cite{vanDerPutDuality}. However, one will have to pay the price of the difficult results involved.
\end{rmk}

\bibliographystyle{plain}
\bibliography{biblio}
\end{document}